\def\interleave{|\kern-.25ex|\kern-.25ex|}
\def\interleavesub{|\kern-.15ex|\kern-.15ex|}
\newcommand{\nNorm}[1]{\left|\kern-.25ex\left|\kern-.25ex\left| {#1}\right|\kern-.25ex\right|\kern-.25ex\right|}
\newcommand{\E}{{\mathbb E}}
\newcommand{\eps}{{\epsilon}}
\newcommand{\R}{{\mathbb R}}
\renewcommand{\P}{{\mathbb P}}
\newcommand{\A}{{\mathcal{A}}}
\newcommand{\M}{{\mathcal{M}}}
\newcommand{\C}{{\mathcal{C}}}
\newcommand{\I}{{\mathcal{I}}}
\newcommand{\U}{{\mathcal{U}}}
\newcommand{\F}{{\cal F}}
\newcommand{\bt}{\theta}
\newcommand{\s}{s}
\def\min{\mathop{\text{\rm min}}}
\def\max{\mathop{\text{\rm max}}}
\def\inf{\mathop{\text{\rm inf}}}
\def\sup{\mathop{\text{\rm sup}}}
\numberwithin{equation}{section}
\theoremstyle{plain}
\newtheorem{theorem}{Theorem}[section]
\newtheorem{lemma}{Lemma}[section]
\newtheoremstyle{remark}{\topsep}{\topsep}%
     {\normalfont}
     {}           
     {\bfseries}  
     {.}          
     {.5em}       
     {\thmname{#1}\thmnumber{ #2}\thmnote{ #3}}
\theoremstyle{remark}
\newtheorem{remark}{Remark}[section]
\long\def\comment#1{}
\def\P{{\mathbb P}}
\def\E{{\mathbb E}}
\def\supp{\mathop{\text{supp}\kern.2ex}}
\def\argmin{\mathop{\text{\rm arg\,min}}}
\def\argmax{\mathop{\text{\rm arg\,max}}}
\let\hat\widehat
\let\tilde\widetilde
\let\hat\widehat
\let\tilde\widetilde
\def\1{{(1)}}
\def\2{{(2)}}
\def\M{{\mathcal{M}}}
\long\def\comment#1{}
\long\def\comment#1{}
\def\P{{\mathbb P}}
\def\E{{\mathbb E}}
\def\supp{\mathop{\text{supp}\kern.2ex}}
\def\argmin{\mathop{\text{\rm arg\,min}}}
\def\argmax{\mathop{\text{\rm arg\,max}}}
\let\tilde\widetilde
\let\hat\widehat
\let\tilde\widetilde
\def\1{{(1)}}
\def\2{{(2)}}
\long\def\comment#1{}
\def\threebars{\mbox{$|\kern-.25ex|\kern-.25ex|$}}
\def\F{\mathbb{F}}
\begin{document}

\hypersetup{citecolor=MidnightBlue}
\hypersetup{linkcolor=Black}
\hypersetup{urlcolor=MidnightBlue}

\begin{frontmatter}

\mbox{}
\vskip.25in
\centerline{\Large\bf Adaptive Risk Bounds in Unimodal Regression}
\runtitle{Risk Bounds in Unimodal Regression}

\begin{aug}
\author{\fnms{Sabyasachi} \snm{Chatterjee}\ead[label=e1]{sabyasachi@galton.uchicago.edu}}
\and
\author{\fnms{John} \snm{Lafferty}\ead[label=e4]{lafferty@galton.uchicago.edu}}
\address{
\vskip1pt
\begin{tabular}{c}
Department of Statistics \\
The University of Chicago 
\end{tabular}
\\[10pt]
\today\\[5pt]
}
\end{aug}

\begin{abstract}
We study the statistical properties of the least squares
estimator in unimodal sequence estimation.  Although closely related to
isotonic regression, unimodal regression has not been as extensively
studied. We show that the unimodal least squares estimator 
is adaptive in the sense that the risk scales 
as a function of the number of values in the true underlying
sequence. Such adaptivity properties have been shown for isotonic regression by
\cite{chatterjee2015risk} and \cite{bellec:2016}. A technical
complication in unimodal regression is the non-convexity of
the underlying parameter space. We develop a general variational
representation of the risk that holds whenever the parameter space can
be expressed as a finite union of convex sets, using techniques that
may be of interest in other settings.
\end{abstract}
\begin{keyword}
\kwd{shape constrained inference}
\kwd{minimax bounds}
\kwd{isotonic regression}
\kwd{unimodal regression}
\end{keyword}

\vskip15pt
\end{frontmatter}

\maketitle

\section{Introduction}
In this paper we investigate the statistical properties of the least
squares estimator in unimodal sequence estimation, where the sequence
rises to a mode, and then decreases.  Unimodal regression is a natural
type of shape constrained inference problem. Although closely related
to isotonic regression, unimodal regression has not been as
extensively studied.  We analyze the least squares estimator for
unimodal regression, showing that the estimator is adaptive in the
sense that the risk scales as a function of the number of values in
the true underlying sequence. When the sequence has a relatively small
number of values, the estimator achieves essentially parametric rates
of convergence.  Such adaptivity properties have been shown for
isotonic regression in the recent literature
\citep{chatterjee2015risk,bellec:2016}. However, existing proof
techniques for isotonic regression do not directly extend to the
unimodal setting---a technical complication is the non-convexity of
the underlying parameter space. We develop a general variational
representation of the risk that holds whenever the parameter space can
be expressed as a finite union of convex sets, and employ empirical
process techniques that give good upper bounds.  These techniques enable us
to show that the least squares estimator for unimodal regression, to a
considerable extent, enjoys similar adaptivity properties as the least
squares estimator for isotonic regression.

In more detail, we consider the problem of unimodal regression where for design points $x_1 \leq x_2 \dots \leq x_n$ we observe 
\begin{equation*}
y_{i} = f(x_i) + z_{i}, \qquad \mbox{for } i=1,\ldots,
n
\end{equation*}
where $f: \R \rightarrow \R$ is a unimodal function and the random errors $z_{i}$ are assumed to be independently distributed as $N(0,\sigma^2)$, with the variance $\sigma^2$ unknown. Our analysis also carries over to the case when $z$ is a mean zero error vector with independent entries and absolute value bounded by a constant $\sigma > 0.$ We consider the design points to be fixed but arbitrary, and hence the problem of estimating the unimodal function $f$ reduces to the problem of estimating an unknown vector $\theta^* \in \R^{n}$ from observations
\begin{equation}\label{eq:RegMdl}
y_{i} = \theta_{i}^* + z_{i}, \qquad \mbox{for } i=1,\ldots,
n
\end{equation}
where $\theta^*$ is constrained to lie in
\begin{equation}\label{eq:M}
 \mathcal{U}_n := \{ \theta \in \R^{n}: \theta_{1} \leq \theta_{2} \leq \dots \leq  \theta_{m} \geq \theta_{m + 1} \dots \geq \theta_{n} \:\:\mbox{for some}\:\: 1 \leq m \leq n\}.
\end{equation}
We refer to any vector in $\U_{n}$ as a
\textit{unimodal sequence}; such a sequence first increases
and then decreases. 

In this paper we are
concerned with the statistical problem of estimating $\theta^*\in\U_n$
from the data $y$. This problem has interesting structure that we shall exploit.
For any $1 \leq m \leq n,$ let us define the set
\begin{equation}
C_m = \{\theta \in \R^{n}: \theta_{1} \leq \theta_{2} \leq \dots \leq  \theta_{m} \geq \theta_{m + 1}, \dots \geq \theta_{n}\},
\end{equation}
thus using the index $m$ to indicate the mode of the sequence.  Each
set $C_m \subset \R^n$ is a closed convex cone.  When $m= 1$, the set $C_{1}$ is the collection of 
decreasing sequences; when $m = n$, the set
$C_{n}$ is the collection of increasing sequences. Since
$\U_n = \bigcup_{m = 1}^{n} C_m$, we see that the collection of
unimodal sequences can be written as a union of
$n$ closed convex cones, but is not itself a convex set.

The least squares estimator (LSE) $\hat{\theta}$ is defined as the
minimizer of the squared Euclidean norm, $\|y - \theta\|^2$, over
$\theta \in \U_n$,
\begin{equation}\label{eq:LSE}
\hat{\theta} := \argmin_{\theta \in \U_n} \|y - \theta\|^2.   
\end{equation}
Since $\U_n$ is not a convex
set, the LSE $\hat \theta$ may not be uniquely defined. In case there
are multiple minima, the LSE $\hat{\theta}$ can be chosen arbitrarily
over the set of minimizers. For each $1 \leq m \leq n$ since $C_m$
is a closed convex cone, we can define $\hat{\theta}^m$ as the unique projection
of $y$ onto $C_m$. Then the LSE can also be written
as
\begin{equation}\label{union}
\hat{\theta} := \argmin_{\theta \in \{\hat{\theta}^1,\dots,\hat{\theta}^n\}} \|y - \theta\|^2.
\end{equation}

The problem of computing a projection to
the set of unimodal sequences has received considerable
attention \citep{stout2000optimal,stout2008unimodal,chemometrics1998least,boyarshinov2006linear}. 
\cite{stout2008unimodal} shows that by using variations of
the well-known pooled adjacent violators (PAVA) algorithm
for isotonic regression \citep{BBBB72,grotzinger1984projections}, one
can design an $O(n)$ algorithm to
compute a unimodal projection, and hence the LSE.
Thus, there is effectively no 
computational price to pay when fitting unimodal
sequences rather than monotone sequences.
At a high level, the contribution
of the current paper is to show that there is also essentially no
statistical price to pay.

Compared with monotone or isotonic regression, the unimodal regression
problem has received relatively little attention in the statistics
literature.
\cite{frisen1986unimodal} presents several
applications of unimodal regression and introduces the least squares
estimator, without analyzing its risk properties.
\cite{shoung2001least} study the convergence of the
mode of the LSE as an estimator of the true mode of a unimodal
function. For classes of unimodal functions indexed by a smoothness
parameter, the authors prove rates of convergence of the mode of the
LSE and also show that the rates are minimax optimal up to logarithmic
factors, for a given smoothness class.  
\cite{kollmann2014unimodal} propose the use of a penalized
estimator based on splines to estimate the underlying unimodal
function, but without studying the risk properties of their estimator.
Hence, apparently little is known about the behavior of the LSE $\hat{\theta}$ as an
estimator of $\theta^*$. 
The problem of unimodal density estimation is actually more well studied than its regression counterpart
\citep{birge1997estimation,eggermont2000maximum,bickel1996some,meyer2001alternative}. A recent work worth mentioning here is~\citet{balabdaoui2015maximum} where the authors study the estimation of discrete unimodal probability mass functions.

The isotonic regression problem is closely related,
except that the underlying sequence is assumed to be
nondecreasing instead of unimodal. As is clear, with the mode known,
fitting a least squares unimodal sequence reduces to fitting an
increasing sequence to the first part and a decreasing sequence to the
second part. The risk properties of the LSE in the isotonic regression
problem are fairly well understood, having been intensively studied
by a number of authors
\citep{vdG90,vdG93,Donoho91,BM93,Wang96,MW00,Zhang02,chatterjee2015risk}. 
In particular, \cite{Zhang02} shows the existence of a universal positive constant
$C$ such that
\begin{equation}\label{motw}
R(\theta^*, \hat{\theta}) = \frac{1}{n} \E \|\theta^* - \hat\theta\|^2
\leq C \left\{\left(\frac{\sigma^2
    V(\theta^*)}{n} \right)^{2/3} +  \frac{\sigma^2 \log n}{n} \right\}
\end{equation}
with $V(\theta^*) := \theta^*_n - \theta^*_1$. This result shows that
the risk of $\hat{\theta}$ scales as  $n^{-2/3}$ in the sequence
length,  provided $V(\theta^*)$ is bounded from above by a
constant. It can be proved that $n^{-2/3}$ is the minimax rate of
estimation in this problem, see~\citep{chatterjee2015risk}. 

For a monotonic vector $\theta \in \R^n$ we define $\s(\theta)$ to be the
cardinality of the set of values:
\begin{equation}
\s(\theta) = \bigl|\{\theta_1, \dots, \theta_n \}\bigl|. 
\end{equation}
Also let us define $\M_{n} = \{\theta \in \R^n:
\theta_1 \leq \dots \leq \theta_n\}$ to be the set of monotonic
sequences. A complementary upper bound on $R(\theta^*, \hat{\theta})$
in the isotonic regression problem has been proved by
\cite{chatterjee2015risk} who show that (for $n > 1$)
\begin{equation}\label{usa}
  R(\theta^*, \hat{\theta}) \leq 6 \inf_{\theta \in \M_n}
  \left(\frac{\|\theta^* - \theta\|^2}{n} + \frac{\sigma^2
    \s(\theta)}{n} \log \frac{en}{s(\theta)} \right) .
\end{equation}
This risk bound has recently been further improved
by~\cite{bellec:2016}, with the constant $6$ being improved to $1$,
and with a removal of the assumption that the true  underlying sequence $\theta^*$
is isotonic, yielding
\begin{equation}\label{bellecada}
  R(\theta^*, \hat{\theta}) \leq \inf_{\theta \in \M_n}
  \left(\frac{\|\theta^* - \theta\|^2}{n} + \frac{\sigma^2
    \s(\theta)}{n} \log \frac{en}{s(\theta)} \right) .
\end{equation}

The bounds \eqref{motw} and \eqref{bellecada} provide a nearly
complete understanding of the global accuracy of the LSE
$\hat{\theta}$ in isotonic sequence estimation. In particular, the risk
of the LSE can never be larger than the minimax rate $(\sigma^2
V(\theta^*)/n)^{2/3}$, while it can be the parametric rate (up to log factors)
$\sigma^2/n$, up to logarithmic multiplicative factors, if $\theta^*$
can be well-approximated by $\theta$ having small $\s(\theta)$. In
fact, the sharp oracle inequality~\eqref{bellecada} also implies parametric
rates for $\theta$ that is well approximated by a piecewise constant
sequence with not too many values. It also gives rates of convergence
for $\hat{\theta}$ to the projection of $\theta^*$ in the cone $\M_n$
in case $\theta^*$ is not in $\M_n$. So, in this sense, the LSE in the
isotonic regression problem is automatically adaptive to piecewise constant sequences. Such automatic adaptivity properties are also seen in other shape constrained
estimation problems such as convex regression
\citep{chatterjee2015risk,GSvex} and monotone matrix estimation
\citep{chatterjee2015matrix}.  The goal of the research leading to the
current paper was to extend the parameter space from isotonic to
unimodal sequences and investigate whether the risk
bounds~\eqref{motw} and~\eqref{bellecada} continue to hold. The
following section summarizes our findings.

\def\256{}

\section{Results}
Our first result establishes minimaxity of the least squares estimator
for unimodal sequence estimation.
\begin{theorem}\label{worstcase}
Fix any positive integer $n$ and $\theta^* \in \U_n$. Let $V(\theta^*)
= \max_{1 \leq i \leq n}\theta^*_{i} - \min_{1 \leq i \leq n}
\theta^*_{i}$. There exists a universal constant $C$ such that for any positive $\alpha > 0$ we have the following upper bound with probability not less than $1 - 2n^{- \alpha},$
\begin{equation*}
\frac{1}{n} \|\hat{\theta} - \theta^*\|^2 \leq
C \sigma^{4/3} \left(V(\theta^*) + \256 \sigma\right)^{2/3} n^{-2/3} +
(C + 24 \alpha) \sigma^2 \frac{\log n}{n}.
\end{equation*}
\end{theorem}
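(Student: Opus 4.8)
The plan is to run the classical basic-inequality-plus-localization argument for least squares under a convex constraint, applied one cone at a time, and then to pay an extra $\log n$ factor for the union over the $n$ cones $C_1,\dots,C_n$.

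Since $\theta^* \in \U_n$ and $\hat\theta$ minimizes $\|y-\theta\|^2$ over $\U_n$, substituting $y=\theta^*+z$ into $\|y-\hat\theta\|^2\le\|y-\theta^*\|^2$ gives the basic inequality $\|\hat\theta-\theta^*\|^2\le 2\langle z,\hat\theta-\theta^*\rangle$; write $t:=\|\hat\theta-\theta^*\|$. By \eqref{union} the minimizer $\hat\theta$ equals the projection $\hat\theta^{\hat m}$ of $y$ onto some (random) cone $C_{\hat m}$, so $\hat\theta\in C_{\hat m}$. Let $\theta^{(m)}$ denote the Euclidean projection of the \emph{true} vector $\theta^*$ onto $C_m$, and $d_m:=\|\theta^*-\theta^{(m)}\|$. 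Since $\hat\theta\in C_{\hat m}$ we have $d_{\hat m}\le t$, hence $\|\hat\theta-\theta^{(\hat m)}\|\le 2t$, and decomposing $\hat\theta-\theta^*=(\theta^{(\hat m)}-\theta^*)+(\hat\theta-\theta^{(\hat m)})$ yields
\begin{equation*}
t^2 \ \le\ 2\langle z,\ \theta^{(\hat m)}-\theta^*\rangle \ +\ 2\sup_{\theta\in C_{\hat m},\ \|\theta-\theta^{(\hat m)}\|\le 2t} \langle z,\ \theta-\theta^{(\hat m)}\rangle .
\end{equation*}

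I would then bound the two terms separately and uniformly in $m$. For the first term, $\theta^{(m)}-\theta^*$ is a fixed vector, so $\langle z,\theta^{(m)}-\theta^*\rangle$ is $N(0,\sigma^2 d_m^2)$; a union bound over $m=1,\dots,n$ gives, with probability at least $1-n^{-\alpha}$, that $\langle z,\theta^{(m)}-\theta^*\rangle\le\sigma d_m\sqrt{2(\alpha+1)\log n}$ for every $m$, so the first term is at most $2\sigma t\sqrt{2(\alpha+1)\log n}$ because $d_{\hat m}\le t$. For the second term I need a bound on the localized Gaussian complexity of $C_m$ around $\theta^{(m)}$ that holds uniformly in $m$ and depends on $\theta^*$ only through $V(\theta^*)$. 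This rests on three observations: (i) a vector lies in $C_m$ exactly when its restriction to coordinates $1,\dots,m$ is nondecreasing and its restriction to coordinates $m,\dots,n$ is nonincreasing, so (relaxing the single coupling at index $m$) the supremum splits into one over a shifted monotone cone on $m$ coordinates plus one over a shifted antitone cone on $n-m+1$ coordinates; (ii) $\theta^{(m)}$ has range at most $V(\theta^*)$, since replacing $\theta^{(m)}$ by its coordinatewise truncation to $[\min_i\theta^*_i,\max_i\theta^*_i]$ keeps it in $C_m$ and can only decrease its distance to $\theta^*$; and (iii) the standard estimate behind Zhang's bound \eqref{motw}, that for a point $c$ of range $V$ in the monotone cone $\M_\ell$ one has $\E\sup_{\phi\in\M_\ell,\ \|\phi-c\|\le r}\langle z,\phi-c\rangle\le C\sigma\bigl(V^{1/2}\ell^{1/4}r^{1/2}+\sqrt{\log(e\ell)}\,r\bigr)$. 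Adding the two pieces and using $m^{1/4}+(n-m+1)^{1/4}\le 2n^{1/4}$ bounds the expected complexity by $C\sigma\bigl(V(\theta^*)^{1/2}n^{1/4}r^{1/2}+\sqrt{\log(en)}\,r\bigr)$ uniformly in $m$; Gaussian concentration of the supremum plus a union bound over the $n$ cones then upgrades this to a high-probability bound, with an extra $\sqrt{(\alpha+1)\log n}\,r$ term. (A minor adjustment, relevant only when $V(\theta^*)$ is of order $\sigma$ or smaller, is what produces the $V(\theta^*)+c\sigma$ in the statement rather than $V(\theta^*)$.)

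Plugging the two bounds back in with $r=2t$ gives an inequality of the form $t^2\le a\,t^{1/2}+b\,t$ with $a$ of order $\sigma\bigl(V(\theta^*)+c\sigma\bigr)^{1/2}n^{1/4}$ and $b$ of order $\sigma\sqrt{(\alpha+1)\log n}$; the elementary implication $t^2\le a t^{1/2}+bt\ \Rightarrow\ t^2\le C(a^{4/3}+b^2)$ then yields $\|\hat\theta-\theta^*\|^2\le C\sigma^{4/3}\bigl(V(\theta^*)+c\sigma\bigr)^{2/3}n^{1/3}+C'(\alpha+1)\sigma^2\log n$, and dividing by $n$ gives the theorem, the probability $1-2n^{-\alpha}$ coming from the two $n^{-\alpha}$ exceptional events. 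The routine pieces are the basic inequality, the geometric decomposition, the Gaussian tail bounds and the final algebra; the real work — and the reason the non-convexity of $\U_n$ does not hurt — is (ii)--(iii): controlling the localized complexity of \emph{every} cone $C_m$ simultaneously, purely in terms of $V(\theta^*)$, and converting expectations to high probability at a cost of only $\sigma^2\log n/n$. I expect that to be the main obstacle, and it is presumably where the paper's general variational representation of the risk for a finite union of convex sets does the heavy lifting.
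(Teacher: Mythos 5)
Your architecture is genuinely different from the paper's and the skeleton is sound: the paper never introduces the projections $\theta^{(m)}$ of $\theta^*$ onto the cones $C_m$. Instead it splits the true $\theta^*\in C_{m^*}$ into its two monotone pieces, bounds $\E\sup_{\theta\in C_m:\|\theta-\theta^*\|\le t}\langle z,\theta-\theta^*\rangle$ for a \emph{monotone} center and a \emph{unimodal} constraint set (Lemma~\ref{key1}), takes a max over $m$ with Gaussian concentration, and closes the loop with a dyadic peeling of the normalized supremum $H_t$ over shells $2^lt<\|\theta-\theta^*\|\le 2^{l+1}t$. Your centering at $\theta^{(\hat m)}$, with the cross term $\langle z,\theta^{(\hat m)}-\theta^*\rangle$ killed by a union bound over $n$ fixed Gaussians, buys something real: after splitting $C_m$ at the mode, each half is a pure monotone-cone supremum around a monotone center of range at most $V(\theta^*)$ (your clipping argument for (ii) is correct), so you never need the unimodal-specific geometry that the paper's Lemma~\ref{key1} relies on --- namely that sublevel sets of a unimodal sequence are unions of at most two intervals and superlevel sets are single intervals containing the mode.

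The gap is in step (iii). The estimate you cite as ``standard'' is exactly the hard part of the proof, and it is not available in the form you state. A direct Dudley/Gao--Wellner bound on $\{\phi\in\M_\ell:\|\phi-c\|\le r\}$ gives $C\sigma\ell^{1/4}r^{1/2}(V+2r)^{1/2}$, which is useless for $r\gg V$; the tangent-cone route gives $r\sigma\sqrt{k\log(e\ell/k)}$ with $k$ the number of constant pieces of $c$, which is useless for strictly increasing $c$. What the truncation-plus-peeling argument actually delivers (this is precisely the content of Lemma~\ref{key1}, with truncation level $L=C\sigma$) is
\begin{equation*}
\E\sup_{\phi\in\M_\ell,\ \|\phi-c\|\le r}\langle z,\phi-c\rangle\ \le\ C\sigma\,\ell^{1/4}r^{1/2}\bigl(V+C\sigma\bigr)^{1/2}+\epsilon r^2,
\end{equation*}
with an additive \emph{quadratic} slack $\epsilon r^2=C r^2\sigma/L$, not the linear slack $r\sqrt{\log(e\ell)}$ you wrote. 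This is not a cosmetic difference in your framework: your slack is evaluated at radius $r=2t$ where $t=\|\hat\theta-\theta^*\|$ is the quantity being bounded, it appears twice (two monotone halves) and is multiplied by the factor $2$ from the basic inequality, so with the paper's choice $\epsilon=1/8$ you get $t^2\le\cdots+2t^2$ and the inequality is vacuous. The fix is routine --- take $L$ a larger constant multiple of $\sigma$ so that the total slack is, say, $t^2/2$, and absorb it into the left side; this only changes the constant inside $(V(\theta^*)+C\sigma)$ --- but you must track it, and you must prove the displayed estimate rather than cite it. A second, smaller omission: the radius $r=2t$ is random, so the fixed-$r$ high-probability bound cannot be substituted directly; you need a union bound over dyadic radii up to the a priori bound $t\le 2\|z\|\le C\sigma\sqrt{n}$ (this is the role of the paper's shells indexed by $l$), which costs only an extra $\log\log$ inside the $\sqrt{\log n}$ term. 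With those two repairs your route goes through and is, if anything, a cleaner reduction of the unimodal problem to the isotonic one than the paper's.
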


This is the analog of~\eqref{motw} in unimodal
regression. It shows that the least squares estimator $\hat{\theta}$ converges to
$\theta^*$ at the rate $n^{-2/3}$ in mean squared error loss,
uniformly for all unimodal sequences $\theta^*$ with $V(\theta^*)$ bounded. 
Since the minimax rate for the set of
isotonic sequences with $V(\theta^*) \leq C$ is 
$O(n^{-2/3})$, and unimodal regression is concerned with a larger parameter space than isotonic regression, this establishes that the least squares estimator is minimax rate optimal.

Our next theorem says that the least squares
estimator is adaptive to sequences that are piecewise constant, in the sense that the risk scales according to the number of pieces of the true sequence and has faster convergence rate than the worst case $O(n^{-2/3})$ rate.
\begin{theorem}\label{adaptive}
Fix a positive integer $n$ and $\theta^* \in \U_n$. Let $m^*$ be a
mode for $\theta^*,$ that is $\theta^* \in C_{m^*}$. Let $s_1$ equal
the number of distinct values of $(\theta^*_{1},\dots,\theta^*_{m^*})$
and $s_2$ equal the number of distinct values of $(\theta^*_{m^* +
1},\dots,\theta^*_{n})$. Fix any $\alpha > 0$. Then the mean squared error satisfies
\begin{equation}
\frac{1}{n} \|\hat{\theta} - \theta^*\|^2 \leq
12 \sigma^2 \left(\frac{s_1+s_2}{n}\right) \log\left(\frac{e n}{s_1+s_2}\right) +
48 (\alpha+2) \sigma^2 (s_1 + s_2) \frac{\log n}{n}
\end{equation}
with probability at least $1 - 4/n^\alpha$.
\end{theorem}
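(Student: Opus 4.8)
The plan is to bypass the non-convexity of $\U_n$ with a self-bounding argument that reduces the analysis to a union bound over the $n$ convex cones $C_1,\dots,C_n$. Write $t=\|\hat\theta-\theta^*\|$ for the quantity to be bounded and $z$ for the noise vector. Since $\theta^*\in\U_n$ and $\hat\theta$ minimises $\|y-\theta\|^2$ over $\U_n$, one has $\|y-\hat\theta\|^2\le\|y-\theta^*\|^2$, and expanding $y=\theta^*+z$ gives the basic inequality $t^2\le2\langle z,\hat\theta-\theta^*\rangle$. The ingredient that is absent in the isotonic case is the following: since $\hat\theta\in\U_n=\bigcup_m C_m$, there is a (random) mode $\hat m$ with $\hat\theta\in C_{\hat m}$, and because $\hat\theta$ is one particular point of the closed convex cone $C_{\hat m}$ we automatically get $\mathrm{dist}(\theta^*,C_{\hat m})\le\|\hat\theta-\theta^*\|=t$. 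In words, the approximation error of the cone the estimator lands in is controlled by the loss itself, so it may be absorbed rather than bounded a priori; this is the concrete form taken here by the general variational representation for parameter spaces that are finite unions of convex sets.

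For each $m$, let $\pi^m:=P_{C_m}(\theta^*)$ denote the deterministic projection of the truth onto $C_m$, so that $\pi^{m^*}=\theta^*$. I would decompose $\hat\theta-\theta^*=(\hat\theta-\pi^{\hat m})+(\pi^{\hat m}-\theta^*)$. The first summand lies in the tangent cone $T_{C_{\hat m}}(\pi^{\hat m})$ (both $\hat\theta$ and $\pi^{\hat m}$ belong to the convex set $C_{\hat m}$) and, by the triangle inequality and the distance bound above, has norm at most $t+\mathrm{dist}(\theta^*,C_{\hat m})\le2t$; the second summand has norm $\mathrm{dist}(\theta^*,C_{\hat m})\le t$ and points along one of only $n$ deterministic unit vectors $u_m:=(\pi^m-\theta^*)/\|\pi^m-\theta^*\|$ (and is $0$ when $\pi^{\hat m}=\theta^*$). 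Substituting into $t^2\le2\langle z,\hat\theta-\theta^*\rangle$ and dividing by $t$ turns this into a linear inequality in $t$:
\[
t\ \le\ 4\max_{1\le m\le n}W_m\ +\ 2\max_{1\le m\le n}\bigl|\langle z,u_m\rangle\bigr|,\qquad W_m:=\sup_{u\in T_{C_m}(\pi^m),\ \|u\|\le1}\langle z,u\rangle .
\]

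It then remains to control the two suprema on the right. Each $C_m$ is the product of a monotone cone on $\{1,\dots,m\}$ and an antitonic cone on $\{m,\dots,n\}$, and the monotone projection $\pi^m$ of the unimodal $\theta^*$ is piecewise constant with $O(s_1+s_2)$ blocks, because pooled adjacent violators applied to a monotone-then-antitone sequence never increases the number of constant blocks and $\theta^*$ restricted to either side of $m$ has at most $s_1+s_2$ of them. Consequently $T_{C_m}(\pi^m)$ is a product of monotone/antitonic cones of sizes $n_1,\dots,n_k$ with $k=O(s_1+s_2)$, whose statistical dimension is $\sum_j\sum_{i=1}^{n_j}i^{-1}\le k\log(en/k)\lesssim(s_1+s_2)\log\frac{en}{s_1+s_2}$, so $\E W_m\lesssim\sigma\sqrt{(s_1+s_2)\log\frac{en}{s_1+s_2}}$. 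Since each $W_m$ is a $\sigma$-Lipschitz function of $z$, Gaussian concentration together with a union bound over the $n$ modes gives $\max_m W_m\lesssim\sigma\sqrt{(s_1+s_2)\log\frac{en}{s_1+s_2}}+\sigma\sqrt{(\alpha+1)\log n}$, and likewise $\max_m|\langle z,u_m\rangle|\lesssim\sigma\sqrt{(\alpha+1)\log n}$, each on an event of probability at least $1-O(n^{-\alpha})$. Squaring the displayed bound, collecting the exceptional events into one of probability at most $4/n^\alpha$, and tracking the constants yields the stated inequality (the $\log n$ contribution produced by this route is actually of the smaller order $\sigma^2(\alpha+1)\log n$, a fortiori below $48(\alpha+2)\sigma^2(s_1+s_2)\log n$).

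The main obstacle is the first part. Since $\U_n$ is not convex there is no single tangent cone at $\theta^*$ to localise against, and the realised cone $C_{\hat m}$ need not contain $\theta^*$, so the standard cone-projection analysis used for isotonic regression does not apply directly; the self-bounding inequality $\mathrm{dist}(\theta^*,C_{\hat m})\le\|\hat\theta-\theta^*\|$ is precisely what permits replacing $\theta^*$ by the deterministic projections $\pi^m$ and reducing to finitely many convex problems. A secondary technical point, needed to obtain dependence on $s_1+s_2$ rather than on the ambient dimension, is the block-count bound for the monotone projections $\pi^m$ and the resulting statistical-dimension estimate.
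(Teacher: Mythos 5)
Your reduction is sound and genuinely different from the paper's at the key step. The paper proves a deterministic ``slicing'' identity (Lemma~\ref{chapro}): because $\U_n=\bigcup_m C_m$ with each $C_m$ convex, $\|\hat\theta-\theta^*\|$ maximizes $t\mapsto\sup_{\theta\in\U_n:\|\theta-\theta^*\|\le t}\langle z,\theta-\theta^*\rangle-t^2/2$, so it suffices to find $t^*$ beyond which this function is negative; the local supremum is then bounded (Lemma~\ref{lemmaadap}) by decomposing over the $n$ candidate modes, splitting each mode's supremum over the constant blocks of $\theta^*$ itself, and invoking the statistical dimension of the monotone cone plus Gaussian concentration and a union bound. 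You instead run the basic inequality directly, use the self-bounding observation $\mathrm{dist}(\theta^*,C_{\hat m})\le\|\hat\theta-\theta^*\|$ to replace $\theta^*$ by the deterministic projections $\pi^m$, and localize in the tangent cone $T_{C_{\hat m}}(\pi^{\hat m})$. The probabilistic core (union over $n$ modes, block decomposition, $\sum_j\log(e|B_j|)\le k\log(en/k)$, Lipschitz concentration) is the same in both arguments; what your route buys is that it never needs the variational lemma, at the cost of an extra approximation term $\langle z,\pi^{\hat m}-\theta^*\rangle$ and a block-count lemma for the projections $\pi^m$ (which the paper avoids entirely by centering at $\theta^*$, whose block structure is given).

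Two caveats. First, $C_m$ is \emph{not} the product of a monotone cone on $\{1,\dots,m\}$ and an antitonic cone on $\{m+1,\dots,n\}$: the constraint $\theta_m\ge\theta_{m+1}$ couples the halves, so the projection onto $C_m$ need not split into two PAVA runs and the tangent-cone product structure you assert is not literally correct. This is easily patched by enlarging each $C_m$ to the genuine product cone $\widetilde C_m=\M_m\times\M^-_{n-m}\supseteq C_m$ throughout (the distance bound, the tangent-cone membership, and the block-count argument all survive the enlargement), but as written the step is wrong. Second, your parenthetical claim that tracking constants recovers the stated bound does not hold: the triangle-inequality factor $\|\hat\theta-\pi^{\hat m}\|\le 2t$, the resulting coefficient $4$ on $\max_m W_m$, and the doubled block count $k\le 2(s_1+s_2)$ compound to a leading constant of order $64$ rather than $12$ after squaring. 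Your argument proves the theorem's rate and probability guarantee, but not the displayed constants; the paper's tighter constants come precisely from centering the localization at $\theta^*$ and optimizing the quadratic $ct-t^2/2$.
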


This theorem says that the mean squared error of the LSE scales for a unimodal sequence scales like $(s_1 + s_2) \log n/n$ where $s_1$ is the number of constant pieces of the part of $\theta^*$ which is nondecreasing and $s_2$ is the number of constant pieces of the part of $\theta^*$ which is nonincreasing. Therefore $s_1 + s_2$ could be thought of as roughly the number of steps (going up and coming down) in the sequence $\theta^*$.

As a consequence of the above theorem, when the true unimodal sequence has a bounded number of steps, the risk of the least squares estimator will decay
at the parametric rate of convergence $O(1/n)$ up to log factors.
As an illustration, if $\theta^*$ is the vector of evaluations of the
indicator function $f(x) =
\I\{0 \leq x \leq 1\}$ at $n$ (sorted) points on the real line, 
the risk of $\hat\theta$ would decrease at the parametric rate. 
The result shows that as long as $s_1 + s_2 = o(n^{1/3})$, 
rates of convergence that are faster than the global minimax rate are obtained.


\begin{remark}
Both Theorems~\ref{worstcase} and~\ref{adaptive} hold when the error
vector is Gaussian with independent entries, mean zero and variance
$\sigma^2$. They also hold when the error vector is composed of mean
zero independent entries with absolute value bounded by $\sigma >
0$. An important ingredient in our proofs is the concentration result
(Theorem~\ref{gaussconc}) for Lipschitz functions of a Gaussian random
vector. The proof for bounded errors follow by using Ledoux's
concentration result (Theorem~\ref{conc}) for convex Lipschitz
functions of the error vector.
\end{remark} 

\begin{remark}
The same upper bound of Theorem~\ref{adaptive} was obtained
independently by \cite{bellec:2016} and \cite{seriation}.
\end{remark}

\begin{remark}
All our risk bounds are shown to hold with high probability. These bounds can then be integrated to get bounds in expectation.
\end{remark}

\begin{remark}
The rate of convergence in Theorem~\ref{worstcase} scales with $n$ in
exactly the same way as in isotonic regression, in spite of the fact
that the parameter space is now a union of $n$ convex cones, each of
which is comprised of at most two isotonic pieces. In particular, the risk bound in
Theorem~\ref{worstcase} does not have a logarithmic factor of $n$.
\end{remark}



\begin{remark}
No smoothness conditions are assumed of the underlying unimodal
sequence. The least squares estimator is fully automated and does not
require any tuning parameters, as is often the case for
shape-constrained estimators.
\end{remark}

\begin{remark}
Fitting the unimodal least squares estimator can provide a means of
trading off computational time for statistical risk accuracy in a concave
regression problem.  To explain, note that the set of unimodal sequences contains
the set of concave sequences.  The latter has a minimax rate of $O(n^{-4/5})$, which is naturally faster than the rate in unimodal regression. However, to the best of our knowledge,
the running time for convex regression is at least quadratic, $O(n^2)$,
whereas runtime for unimodal regression is $O(n).$
While the unimodal estimator will not be concave in general, 
it will be close to the underlying concave sequence in an average
$\ell_2$ sense, for large $n$.
\end{remark}

The worst case rate of convergence (Theorem~\ref{worstcase})
of the LSE for unimodal regression matches that of 
isotonic regression, and the adaptive risk bound (Theorem~\ref{adaptive})
is almost as strong as the adaptive risk bound~\eqref{bellecada} for
isotonic regression. We say almost because Theorem~\ref{adaptive} is
useful for $\theta^*$ which is exactly piecewise constant with a few
pieces but may not be useful when $\theta^*$ is very well approximable
by a piecewise constant with a few pieces. Risk bounds which provide
such ``continuity'' of risk are often referred to as oracle inequalities. Previous proofs for oracle risk bounds in shape constrained problems appear to rely heavily on the
convexity of the parameter space; thus these techniques do
not readily apply to unimodal regression, where $\U_n$ is nonconvex.
In particular, existing proof techniques first bound the 
statistical dimension (see~\citet{amelunxen2014living}) for a closed convex cone when $\theta^*$ belongs to the 
lineality space of the cone (see~\citet{chatterjee2015risk}), and then derive risk bounds
for Gaussian widths of the tangent cone at $\theta^*$
(see Proposition 1 in~\cite{bellec:2016} and Lemma 4.1 in~\cite{chatterjee2015matrix}).
Such techniques rely on the convexity of
the parameter space and exploit the KKT conditions
for the projection onto closed convex sets. Our parameter
space is not convex and hence we are unable to directly use these
results to prove oracle risk bounds.

The main complication in this problem is to handle the unknown mode location. Our analysis reveals  that even with the unknown mode, it is still possible to derive adaptive risk bounds for piecewise
constant unimodal sequences, when the number of steps is not too
large. At a high level, the main idea here is to show first that the
risk depends on a local Gaussian width like term. This fact is now
known when the parameter space is convex and we are able to extend
this observation to our nonconvex parameter space. We analyze
this local Gaussian width like term and by various steps of
refinement we show that this term scales like the mean squared error
in isotonic regression plus logarithmic terms.

\section{LSE Slicing Lemma}

In this section we show that the loss
$\|\hat{\theta} - \theta^*\|$ has a
variational representation whenever the underlying vector
$\theta^*$ is known to belong to a parameter space $C$ that can
be expressed as a finite union of convex ``slices.'' 
The lemma is a deterministic identity that 
generalizes Proposition 1.3 in~\cite{Chat14}. For us, this lemma is directly applied in the proof of Theorem~\ref{adaptive}. This technique may be of use in other contexts; for example
the set of permutations arising in the analysis of pairwise
comparisons has this structure, see~\citet{shah2015stochastically},~\citet{chatterjee2016estimation}.

\begin{lemma}\label{chapro}
Let $C =
\cup_{m = 1}^{M} C_m$ where each $C_m \subset \R^n$ is a closed convex set.
Fix some $\theta^* \in C$, and let $y = \theta^* + z$.
Define the function $f_{\theta^*}: \R_{+} \rightarrow \R$ as 
\begin{equation}
f_{\bt^*}(t) = \sup_{\theta \in C: \|\theta - \theta^{*}\| \leq t} \langle z,\theta - \theta^{*}\rangle - \frac{t^2}{2}. 
\end{equation}
If $\hat{\theta} \in \argmin_{\theta \in C} \|y - \theta \|^2$
is a least squares estimator over $C$ then
\begin{equation}\label{deter}
\|\hat{\theta} - \theta^*\| \in \argmax_{t \geq 0} f_{\theta^*}(t).
\end{equation}
Moreover, if $t^*$ satisfies $f_{\theta^*}(t) < 0 $ for all $t \geq
t^*$, then
\begin{equation}\label{fact}
\|\hat{\theta} - \theta^{*}\| < t^*.
\end{equation} 
Relations \eqref{deter} and~\eqref{fact} are
deterministic, and do not depend on any distributional properties of the error vector $z$.
\end{lemma}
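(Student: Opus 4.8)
The plan is to first treat a single convex slice $C_m$ and then take a union over $m$. Fix $m$ and let $\hat\theta^m$ denote the projection of $y=\theta^*+z$ onto $C_m$; write $r_m = \|\hat\theta^m - \theta^*\|$. The key observation, which goes back to Proposition 1.3 in \cite{Chat14}, is that for a \emph{convex} set $C_m$ containing $\theta^*$, the quantity $r_m$ is exactly the maximizer of the one-dimensional function
\[
g_m(t) = \sup_{\theta \in C_m:\,\|\theta-\theta^*\|\leq t} \langle z, \theta-\theta^*\rangle - \frac{t^2}{2}.
\]
I would reprove this: expanding $\|y-\theta\|^2 = \|z\|^2 - 2\langle z,\theta-\theta^*\rangle + \|\theta-\theta^*\|^2$ shows that $\hat\theta^m$ minimizing $\|y-\theta\|^2$ over $C_m$ is the same as maximizing $\langle z,\theta-\theta^*\rangle - \tfrac12\|\theta-\theta^*\|^2$ over $C_m$. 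Stratifying that optimization by the value $t=\|\theta-\theta^*\|$ gives $\sup_{t\geq 0} g_m(t)$, and the inner supremum defining $g_m(t)$ is attained on the sphere of radius $t$ when $t \le r_m$ (by convexity of $C_m$, scaling $\theta^* + s(\hat\theta^m-\theta^*)$ stays in $C_m$ and the linear term grows), which forces the outer argmax to be exactly $t=r_m$. This is the step requiring genuine care, since one must verify the inner supremum is nondecreasing up to $r_m$ and that the $-t^2/2$ penalty then makes $g_m$ strictly decreasing past $r_m$; convexity of $C_m$ is used precisely here.

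Next I would globalize. Since $C = \bigcup_{m=1}^M C_m$ and each $C_m$ is closed, the least squares estimator $\hat\theta$ over $C$ equals $\hat\theta^{m_0}$ for some slice $m_0$ achieving $\min_m \|y-\hat\theta^m\|^2$, so $\|\hat\theta-\theta^*\| = r_{m_0}$. On the other hand, $f_{\theta^*}(t) = \max_{1\le m\le M} g_m(t)$, because the supremum in the definition of $f_{\theta^*}$ over $\theta\in C$ with $\|\theta-\theta^*\|\le t$ splits as a maximum of the slice-wise suprema. From the single-slice result, $g_m$ is maximized at $r_m$ with maximal value $g_m(r_m) = \tfrac12 r_m^2 \ge 0$ (evaluate the identity at $\theta=\hat\theta^m$), and $g_m(t) < g_m(r_m)$ for $t\ne r_m$ in the relevant range. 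The slice $m_0$ chosen by the LSE is exactly the one whose optimal value $\tfrac12 r_{m_0}^2$ is largest among $\{\tfrac12 r_m^2\}$ — this is just a restatement of $\hat\theta$ minimizing the residual — so $\max_m g_m(r_m) = g_{m_0}(r_{m_0})$, and hence $\sup_{t\ge0} f_{\theta^*}(t) = g_{m_0}(r_{m_0})$, attained at $t = r_{m_0} = \|\hat\theta-\theta^*\|$. That proves \eqref{deter}.

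Finally, \eqref{fact} follows from \eqref{deter} together with the elementary fact that $f_{\theta^*}(r_{m_0}) = \tfrac12 r_{m_0}^2 \geq 0$: if $f_{\theta^*}(t) < 0$ for every $t \geq t^*$, then no such $t$ can be the argmax (the argmax value is nonnegative), so $\|\hat\theta-\theta^*\| = r_{m_0} < t^*$. I would also note explicitly that nothing in the argument used distributional assumptions on $z$; convexity of each slice and closedness (for existence of projections) are the only structural inputs, which is why the conclusions are deterministic. The main obstacle I anticipate is the careful bookkeeping in the single-slice step — in particular establishing monotonicity of $t \mapsto \sup_{\theta\in C_m,\,\|\theta-\theta^*\|\le t}\langle z,\theta-\theta^*\rangle$ up to $r_m$ and that the argmax of $g_m$ is a \emph{single} point equal to $r_m$ — since the union structure and the reduction \eqref{union} are then essentially formal.
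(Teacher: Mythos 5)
Your overall strategy is the same as the paper's: characterize the per-slice projection $\hat\theta^m$ variationally \`a la Proposition 1.3 of \cite{Chat14}, observe that $f_{\theta^*}(t)=\max_m g_m(t)$, and conclude by taking a maximum over the $M$ slices. The skeleton $\sup_t f_{\theta^*}(t)=\max_m\sup_t g_m(t)=\max_m g_m(r_m)$, with the LSE slice $m_0$ achieving the outer maximum and each $\sup_t g_m(t)$ attained at $t=r_m$, is sound (and in fact sidesteps the sphere-versus-ball bookkeeping that the paper handles via concavity). However, there is a genuine error in how you evaluate the per-slice maxima. The identity $g_m(r_m)=\tfrac12 r_m^2$ is false in general: one has exactly $g_m(r_m)=\langle z,\hat\theta^m-\theta^*\rangle-\tfrac12 r_m^2=\tfrac12 r_m^2+\langle y-\hat\theta^m,\hat\theta^m-\theta^*\rangle$, and the cross term vanishes only in special cases (e.g.\ $C_m$ a cone with $\theta^*$ in its lineality space, which is the setting of \cite{Chat14} but not of this lemma); when $\theta^*\notin C_m$ it can even be negative. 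Take $C_m=[2,3]\subset\R$, $\theta^*=0$, $z=0$: then $r_m=2$ but $g_m(r_m)=-2$. Consequently your characterization of $m_0$ as ``the slice whose $\tfrac12 r_m^2$ is largest'' is wrong --- in that example the slice maximizing $r_m$ is not the one the LSE picks --- and the nonnegativity you invoke for \eqref{fact} does not follow from this identity.

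Both defects are repairable without changing the architecture. The correct characterization, which you also state in passing, is that $m_0$ maximizes $g_m(r_m)=\sup_t g_m(t)$ over $m$: minimizing $\|y-\hat\theta^m\|^2$ over $m$ is literally maximizing $\langle z,\hat\theta^m-\theta^*\rangle-\tfrac12\|\hat\theta^m-\theta^*\|^2$ over $m$, and that is all the chain of equalities needs to place $r_{m_0}$ in $\argmax_t f_{\theta^*}(t)$. The nonnegativity needed for \eqref{fact} should instead come from $f_{\theta^*}(0)=0$, which uses only $\theta^*\in C$; this is exactly the paper's route (via $f_{m^*}(0)=0$ for the slice containing $\theta^*$). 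A secondary issue: your scaling argument ``$\theta^*+s(\hat\theta^m-\theta^*)$ stays in $C_m$'' presupposes $\theta^*\in C_m$, which holds only for the slice(s) actually containing $\theta^*$; the paper avoids this by proving concavity of $t\mapsto\sup_{\theta\in C_m,\,\|\theta-\theta^*\|\le t}\langle z,\theta-\theta^*\rangle$ (its Lemma~\ref{concav}) uniformly in $m$, including slices at positive distance from $\theta^*$.
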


\begin{proof}
To prove~\eqref{deter}, first note that we can write 
\begin{align}
\hat{\theta} &\in \argmin_{\theta \in C} \left(\|z\|^2 + \|\theta -
\theta^*\|^2 - 2 \langle z, \theta - \theta^* \rangle\right) \\
&= \argmax_{\theta \in C} \left(\langle z, \theta - \theta^* \rangle - \|\theta - \theta^*\|^2/2\right).
\end{align}
It follows from the second equation that
\begin{align}
\|\hat{\theta} - \theta^*\| &\in \argmax_{t > 0} \sup_{\theta \in C:
  \|\theta - \theta^*\| = t} \langle z, \theta - \theta^* \rangle -
\frac{t^2}{2} \nonumber \\
&= \argmax_{t > 0} \left(\max_{1 \leq m \leq M } \:\:\sup_{\theta \in
  C_m: \|\theta - \theta^*\| = t} \langle z, \theta - \theta^* \rangle
- \frac{t^2}{2}\right) \nonumber \\
&= \argmax_{t > 0} \max_{1 \leq m \leq M} h_m(t) \label{deter1}
\end{align}
where we have defined the functions $h_{m}: \R_{+} \rightarrow \R$ as 
\begin{equation}
h_m(t) = \sup_{\theta \in C_m: \|\theta - \theta^*\| = t} \langle z, \theta - \theta^* \rangle - \frac{t^2}{2}.
\end{equation}
Now for each $1 \leq m \leq M$ define the functions $g_{m}: \R_{+} \rightarrow \R$ as 
\begin{equation}
g_{m}(t) = \sup_{\theta \in C_m: \|\theta - \theta^{*}\| \leq t} \langle z,\theta - \theta^{*}\rangle.
\end{equation}
If $d_m = \inf_{v \in \C_m} \|\theta^* - v\|$ is positive, then we
define $g_{m}(t) = - \infty$ whenever $t < d_m$.
Since the set $C_m$  is closed convex, it can be shown that $g_m$ is a
concave function of $t$. Such a calculation has been done in the proof of Theorem 1 in~\cite{Chat14} but for sake of completeness we prove the concavity of $g_m$ in Lemma~\ref{concav} in the appendix.
Next, define functions $f_{m}: \R_{+} \rightarrow \R$ according to
\begin{equation}
f_{m}(t) = g_{m}(t) - \frac{t^2}{2}.
\end{equation}
Then $f_m$ is strictly concave
as a function of $t$ whenever $t \geq d_m$. An application of the
Cauchy-Schwarz inequality shows that $f_{m}(t)$ decays to $-\infty$ as
$t \rightarrow \infty$. These facts establish that $f_{m}(t)$ has a
unique maximizer.

Let $t_m$ be the unique maximizer of $f_m$. We now show that $t_m$ is a
unique maximizer of $h_m$. It is clear from the definitions that for all
$t \geq 0$ we have $h_m(t) \leq f_m(t)$.  Recall that
\begin{equation}
f_m(t_m) =
\sup_{\theta \in C_m: \|\theta - \theta^*\| \leq t_m} \langle z,
\theta - \theta^* \rangle - \frac{t_m^2}{2},
\end{equation}
and let $\tilde{\theta} \in
\{\theta \in C_m: \|\theta - \theta^*\| \leq t_m\}$ be a point where
$f_{m}(t_m)$ is achieved. If it were the case that $\|\tilde{\theta} - \theta^*\| = t_0 <
t_m$, then we would have $f_m(t_0) > f_m(t_m)$, contradicting the
definition of $t_m$. Hence $\|\tilde{\theta} - \theta^*\| =
t_m$, implying that $f_m(t_m) = h_m(t_m)$. Therefore, 
\begin{equation}
h_m(x) \leq f_m(x) < f_m(t_m) = h_m(t_m)
\end{equation}
for any $x\neq t_m$.
This shows that $t_m$ is a unique maximizer of $h_m$ as well.
Therefore we have shown that
\begin{align}
\|\hat{\theta} - \theta^*\| &\in \argmax_{t > 0} \max_{1 \leq m \leq M}
h_m(t) \\
&= \argmax_{t > 0} \max_{1 \leq m \leq M} f_m(t) \\
& = \argmax_{t > 0} f_{\theta^*}(t),
\end{align}
thus proving \eqref{deter}. 

It remains to prove~\eqref{fact}. Since $\theta^* \in C$ there exists
$1 \leq m^* \leq M$ such that $\theta^* \in C_{m^*}$. Now it is easy
to see that $f_{m^*}(0) = 0$ and hence $\max_{t \geq 0}
f_{m^*}(t) \geq 0 $. 
Because of this fact and $f_{\theta^*}(t) = \max_{1 \leq m \leq M} f_{m}(t)$ we have
\begin{equation}
\max_{t \geq 0} f_{\theta^*}(t) \geq 0.
\end{equation}
This inequality combined with the representation
formula~\eqref{deter} proves~\eqref{fact}, finishing the proof
of the lemma.
\end{proof}

\section{Global Risk Bound}
The goal of this section is to prove Theorem~\ref{worstcase}. We prove both Theorem~\ref{worstcase} in this section and Theorem~\ref{adaptive} in the next section assuming Gaussian errors. The case of bounded errors will also follow from  our proofs. See Remark~\ref{bddnoise} for more details. Recall that for a
subset $\F \subseteq \R^{n}$ and $\epsilon > 0$, the
$\epsilon$-covering number $N(\epsilon, \F)$ is the minimum number of
balls of radius $\eps$ required to cover $\F$ in the Euclidean
norm. Throughout the following, $C$ will represent a universal
constant, whose particular value might change from calculation to
calculation. The following result follows from the metric entropy
results of monotone functions \citep{GW07}; a proof can be found in
Lemma 4.20 in \cite{Chat14}.  By symmetry, the same covering number
bound holds for nonincreasing sequences.
\begin{theorem}[Gao-Wellner]\label{monocover}
Let $\M_{[a,b]} = \{v \in \R^n: a \leq v_1 \leq \dots \leq v_n \leq
b\}$
denote the set of monotone sequences of length $n$ taking values between $a$ and $b$. Also recall that $\|\cdot\|$ denotes the standard Euclidean norm. Then for any $\eps > 0$,
\begin{equation}
\log N(\eps,\M_{[a,b]}, \|\cdot\|) \leq \frac{C \sqrt{n} (b - a)}{\eps}
\end{equation}
where $C$ is a universal constant.  
\end{theorem}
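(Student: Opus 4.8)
The plan is to deduce the estimate from the sharp metric entropy bound for uniformly bounded monotone \emph{functions} on $[0,1]$ due to \cite{GW07}, by identifying a monotone vector in $\R^n$ with a monotone step function.

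First I would introduce the linear map $\iota\colon\R^n\to L_2[0,1]$ sending $v$ to the step function $\iota(v)=\sum_{i=1}^{n} v_i\,\mathbf{1}_{((i-1)/n,\,i/n]}$. A one-line computation gives $\|\iota(v)-\iota(w)\|_{L_2[0,1]}=n^{-1/2}\|v-w\|$ for all $v,w\in\R^n$, so $\iota$ is, up to the fixed factor $n^{-1/2}$, an isometry onto the subspace $V$ of step functions with mesh $1/n$. Moreover $v\in\M_{[a,b]}$ if and only if $\iota(v)$ is nondecreasing with values in $[a,b]$, hence $\iota(\M_{[a,b]})\subseteq\mathcal{G}_{[a,b]}:=\{g\colon[0,1]\to[a,b]\ \text{nondecreasing}\}$. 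Given the centers of an $(\eps/\sqrt n)$-cover of $\mathcal{G}_{[a,b]}$ in $L_2[0,1]$, projecting them orthogonally onto $V$ and pulling back through $\iota$ produces an $\eps$-cover of $\M_{[a,b]}$ of the same cardinality (orthogonal projection onto $V$ cannot increase the distance from a center to a point of $\iota(\M_{[a,b]})\subseteq V$), whence
\begin{equation*}
\log N(\eps,\M_{[a,b]},\|\cdot\|)\ \le\ \log N\!\left(\frac{\eps}{\sqrt n},\ \mathcal{G}_{[a,b]},\ \|\cdot\|_{L_2}\right).
\end{equation*}

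Next I would rescale the range: the affine bijection $g\mapsto(g-a)/(b-a)$ carries $\mathcal{G}_{[a,b]}$ onto $\mathcal{G}_{[0,1]}$ and multiplies every $L_2$ distance by $1/(b-a)$, so the right-hand side equals $\log N\!\big(\tfrac{\eps}{(b-a)\sqrt n},\,\mathcal{G}_{[0,1]},\,\|\cdot\|_{L_2}\big)$. Finally I would invoke \cite{GW07}: there is a universal constant $A$ with $\log N(\delta,\mathcal{G}_{[0,1]},\|\cdot\|_{L_2})\le A/\delta$ for all $\delta>0$. Taking $\delta=\eps/((b-a)\sqrt n)$ gives the claim with $C=A$; the statement for nonincreasing sequences follows by composing with the reflection $v\mapsto(v_n,\dots,v_1)$, an isometry of $(\R^n,\|\cdot\|)$.

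The only substantive ingredient is the $O(1/\delta)$ entropy bound for bounded monotone functions, and this is the step I expect to be the real obstacle if one insists on a self-contained proof. The naive direct argument --- partition $[a,b]$ into $K\asymp\sqrt n\,(b-a)/\eps$ equal levels, replace $v$ by the monotone vector $\tilde v$ obtained by rounding each coordinate down to the nearest level (so that $\|v-\tilde v\|\le\sqrt n\,(b-a)/K\le\eps$), and bound the number of admissible $\tilde v$ by the number $\binom{n+K-1}{K-1}$ of monotone lattice sequences of length $n$ --- only yields the stated bound up to an extra logarithmic factor in the regime $\eps\gtrsim(b-a)/\sqrt n$. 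Removing that factor is precisely the content of the recursive bracketing construction of Gao--Wellner, so in practice I would simply cite it, as is done in Lemma~4.20 of \cite{Chat14}.
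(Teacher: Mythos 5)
Your proposal is correct and matches the paper's treatment: the paper offers no self-contained proof of this theorem, deriving it instead from the continuous metric entropy bound for bounded monotone functions in Gao--Wellner (via Lemma 4.20 of \cite{Chat14}), and your step-function embedding with the $n^{-1/2}$ isometry, the range rescaling, and the citation of the $O(1/\delta)$ entropy bound is precisely that standard reduction. You are also right that the only substantive ingredient is the log-free $1/\delta$ bound, which both you and the paper take from the literature.
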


Throughout this section, $z\sim N(0,\sigma^2 I_n)$ will denote a Gaussian random variable.
We will require the following chaining bound (see e.g., \citet{VandegeerBook}).
\begin{theorem}[Chaining]
\label{dudthm}
For every $\bt^* \in \M$ and $t > 0$,
\begin{equation}\label{gaup}
\E \left( \sup_{\bt \in B(\bt^*, t)} \left<z, \bt - \bt^* \right> \right) \leq  12 \sigma \int_{0}^{t}
      \sqrt{\log N(\epsilon, B(\bt^*, t))} \;
     d\epsilon,
\end{equation}
where $B(\theta^*,t)$ denotes the standard Euclidean ball around
$\theta^*$ of radius $t$.
\end{theorem}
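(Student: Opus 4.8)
The plan is to treat $X_\theta := \langle z, \theta - \theta^*\rangle$ as a centered Gaussian process indexed by the Euclidean ball $T := B(\theta^*,t)$ and to run the classical Dudley chaining argument. The key observation is that the increments satisfy $X_\theta - X_{\theta'} = \langle z, \theta-\theta'\rangle \sim N\!\bigl(0,\sigma^2\|\theta-\theta'\|^2\bigr)$, so the process is sub-Gaussian with respect to the rescaled Euclidean metric $\sigma\|\cdot\|$; and since $X_{\theta^*}=0$ it suffices to bound $\E\sup_{\theta\in T}(X_\theta - X_{\theta^*})$. Because $\theta\mapsto\langle z,\theta-\theta^*\rangle$ is continuous and $T$ is separable, the supremum over $T$ equals the supremum over a countable dense subset, and a monotone approximation reduces the problem to the case in which $T$ is finite.

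For the chaining step I would, for each integer $j\ge0$, fix a minimal $2^{-j}t$-net $T_j$ of $T$ in the Euclidean norm, so $|T_j| = N(2^{-j}t,T)$; a single ball of radius $t$ about $\theta^*$ covers $T$, so I may take $T_0=\{\theta^*\}$. For $\theta\in T$ let $\pi_j(\theta)$ be a nearest point of $T_j$; finiteness of $T$ gives a $J$ with $\pi_J(\theta)=\theta$, hence $X_\theta - X_{\theta^*} = \sum_{j=1}^{J}\bigl(X_{\pi_j(\theta)} - X_{\pi_{j-1}(\theta)}\bigr)$. Each link satisfies $\|\pi_j(\theta)-\pi_{j-1}(\theta)\| \le 2^{-j}t + 2^{-(j-1)}t = 3\cdot 2^{-j}t$, so it is mean-zero Gaussian with standard deviation at most $3\sigma 2^{-j}t$, and at level $j$ there are at most $|T_j|\,|T_{j-1}| \le N(2^{-j}t,T)^2$ distinct links. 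Applying the standard Gaussian maximal inequality $\E\max_{k\le K}Y_k\le\sigma_0\sqrt{2\log K}$ (valid for mean-zero Gaussians $Y_k$ with variance at most $\sigma_0^2$) at each level $j$ and summing over $j\ge1$ gives
\[ \E\sup_{\theta\in T}\bigl(X_\theta - X_{\theta^*}\bigr) \;\le\; \sum_{j\ge1} 3\sigma\,2^{-j}t\,\sqrt{2\log\bigl(N(2^{-j}t,T)^2\bigr)} \;=\; 6\sigma\sum_{j\ge1} 2^{-j}t\,\sqrt{\log N(2^{-j}t,T)}. \]

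It then remains to compare the dyadic sum with the entropy integral. Since $\epsilon\mapsto N(\epsilon,T)$ is nonincreasing, for $\epsilon\in\bigl(2^{-(j+1)}t,\,2^{-j}t\bigr]$ we have $N(\epsilon,T)\ge N(2^{-j}t,T)$, so $2^{-j}t\sqrt{\log N(2^{-j}t,T)}\le 2\int_{2^{-(j+1)}t}^{2^{-j}t}\sqrt{\log N(\epsilon,T)}\,d\epsilon$; summing over $j\ge1$ bounds the series by $2\int_0^{t/2}\sqrt{\log N(\epsilon,T)}\,d\epsilon\le 2\int_0^t\sqrt{\log N(\epsilon,B(\theta^*,t))}\,d\epsilon$, which combined with the previous display yields the constant $12\sigma$ (and one may as well stop the integral at $t$, since $N(\epsilon,B(\theta^*,t))=1$ for $\epsilon\ge t$). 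There is no genuine obstacle here — this is the textbook Dudley bound — so the only points needing care are the reduction to a finite index set (measurability of the supremum) and the bookkeeping of constants through the Gaussian maximal inequality and the sum-to-integral comparison so that the final constant comes out to exactly $12$.
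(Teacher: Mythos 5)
The paper does not actually prove Theorem~\ref{dudthm}: it is quoted as a known chaining/Dudley entropy bound with a pointer to van de Geer's book, so there is no internal argument to compare against. Your proof is the standard Dudley chaining argument, and it is correct, including the bookkeeping: each link at level $j$ has standard deviation at most $3\sigma 2^{-j}t$, there are at most $N(2^{-j}t,T)^2$ links, the Gaussian maximal inequality gives $3\sigma 2^{-j}t\sqrt{2\log(N^2)}=6\sigma 2^{-j}t\sqrt{\log N}$, and the dyadic-sum-to-integral comparison contributes the remaining factor of $2$, yielding exactly $12\sigma\int_0^t\sqrt{\log N(\epsilon,T)}\,d\epsilon$. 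The only detail worth tightening is the claim that $\pi_J(\theta)=\theta$ for some finite $J$: with the paper's definition of $N(\epsilon,\mathcal F)$ the ball centers need not lie in $T$, so a minimal net at a fine scale need not contain $T$. This is harmless because $X_\theta=\langle z,\theta-\theta^*\rangle$ is defined on all of $\R^n$, so the chain is still well defined, and the residual link $X_\theta-X_{\pi_J(\theta)}$ contributes at most $\sigma 2^{-J}t\sqrt{2\log\bigl(|T|\,N(2^{-J}t,T)\bigr)}\to 0$ as $J\to\infty$ for finite $T$ (alternatively, work with internal nets, which changes nothing else). With that one sentence added, your proof is a complete and self-contained justification of a statement the paper only cites, and it supports the way the theorem is actually invoked in \eqref{dudcal}, where the index set is a subset $K'$ of the ball rather than the ball itself.
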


We will also require a standard Gaussian concentration inequality;
the proof of the following can be found in the argument after equation
(2.35) in~\cite{Ledoux01conc}.
\begin{theorem}[Gaussian concentration]
\label{gaussconc}
Let $f: \R^n \rightarrow \R$ be a function that is $L$-Lipschitz, so
that $|f(x) - f(y)| \leq L \|x - y\|$ for all $x$ and $y$. Then for any $t \geq 0$,
\begin{equation}
\P\left(f(z) \leq \E(f(z) + \sigma t\right) \leq \exp\left(-\frac{t^2}{2 L^2}\right).
\end{equation}
\end{theorem}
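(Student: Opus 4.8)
I will prove the one-sided concentration bound $\P\big(f(z) - \E f(z) \geq \sigma t\big) \leq \exp(-t^2/(2L^2))$; the complementary lower tail in the statement is obtained by applying this to $-f$, which is again $L$-Lipschitz. Rescaling $z \mapsto z/\sigma$ reduces us to $z \sim N(0, I_n)$ and the target $\P(f(z) - \E f(z) \geq r) \leq e^{-r^2/(2L^2)}$ for all $r \geq 0$. Two preliminary reductions are convenient. First, since $f(z) \leq f(0) + L\|z\|$ and $\|z\|$ has finite exponential moments of every order, the cumulant generating function $\Lambda(\lambda) := \log \E e^{\lambda(f(z) - \E f(z))}$ is finite, smooth, and convex on all of $\R$, with $\Lambda(0) = \Lambda'(0) = 0$. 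Second, we may assume $f \in C^\infty$ with $\|\nabla f\| \leq L$ everywhere, by replacing $f$ with the Gaussian mollification $f_\varepsilon = f * \varphi_\varepsilon$ (which is smooth, still $L$-Lipschitz, and satisfies $f_\varepsilon \to f$ and $\E f_\varepsilon(z) \to \E f(z)$) and passing to the limit $\varepsilon \downarrow 0$ at the end via a routine argument.

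The engine is the Gaussian logarithmic Sobolev inequality: for the standard Gaussian measure $\gamma$ on $\R^n$ and smooth $g$,
\[
\int g^2 \log g^2 \, d\gamma \;-\; \Big(\int g^2 \, d\gamma\Big)\log\Big(\int g^2 \, d\gamma\Big) \;\leq\; 2\int \|\nabla g\|^2 \, d\gamma .
\]
I would invoke this as a known fact — it is precisely what the cited passage of \cite{Ledoux01conc} uses, and it can itself be obtained by tensorising the one-dimensional case or from the Ornstein--Uhlenbeck semigroup. Herbst's argument now applies the inequality to $g = e^{\lambda f/2}$ and uses $\|\nabla f\|^2 \leq L^2$ pointwise; after rearranging, this yields the differential inequality
\[
\Big(\frac{\Lambda(\lambda)}{\lambda}\Big)' \;=\; \frac{\lambda \Lambda'(\lambda) - \Lambda(\lambda)}{\lambda^2} \;\leq\; \frac{L^2}{2}, \qquad \lambda > 0 .
\]
Because $\Lambda(\lambda)/\lambda \to \Lambda'(0) = 0$ as $\lambda \downarrow 0$, integrating gives $\Lambda(\lambda) \leq L^2 \lambda^2 / 2$ for all $\lambda > 0$.

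Finally, Chernoff's bound gives, for every $\lambda > 0$,
\[
\P\big(f(z) - \E f(z) \geq r\big) \;\leq\; e^{-\lambda r}\,\E e^{\lambda(f(z) - \E f(z))} \;\leq\; \exp\Big(-\lambda r + \tfrac{1}{2}L^2 \lambda^2\Big),
\]
and taking $\lambda = r/L^2$ produces $e^{-r^2/(2L^2)}$; substituting back $r = \sigma t$ gives the stated bound. I expect the log-Sobolev inequality to be the only non-mechanical ingredient: the Herbst differential-inequality step, the mollification reduction, and the Chernoff optimisation are all routine bookkeeping. For completeness I would also record the alternative derivation from the Gaussian isoperimetric inequality: applied to a super-level set $\{f \geq M\}$ at a median $M$ of $f$, it gives $\P(f \geq M + s) \leq 1 - \Phi(s/(\sigma L)) \leq e^{-s^2/(2\sigma^2 L^2)}$ — the same bound, but centred at the median rather than the mean; trading the median for the mean costs an additive constant, so for the exact, mean-centred, sharp-constant statement used in this paper the log-Sobolev route is the clean one.
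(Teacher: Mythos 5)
Your argument is correct, and it is essentially the argument the paper points to: the paper supplies no proof of its own for this theorem, citing instead the passage after equation (2.35) of Ledoux (2001), which is exactly the Herbst/log--Sobolev (Chernoff) derivation you reproduce, with the mollification and rescaling reductions handled as you describe. One thing worth recording explicitly: the displayed inequality in the theorem as printed is garbled --- the event should be $f(z) \geq \E f(z) + \sigma t$ (and a parenthesis is misplaced) --- and your proof correctly targets this upper-tail version, which is the form actually invoked later via Lemma~\ref{subg} and in the proofs of Lemmas~\ref{key2} and~\ref{lemmaadap}.
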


\begin{remark}\label{bddnoise}
Our entire analysis can be done when $z$ is a mean zero error vector of independent entries with absolute value bounded by a constant $\sigma > 0$. This is done by invoking the following concentration result whenever we have used the Gaussian concentration theorem for Lipschitz functions. We have used the Gaussian concentration theorem for random variables which can be seen as a supremum of linear functions of the error vector $z$ and hence are convex functions of $z$. This fact enables the application of Ledoux's concentration result wherever we have applied Theorem~\ref{gaussconc} .
\end{remark}
A proof of the following theorem can be found in~\citet{boucheron2013concentration}.
\begin{theorem}[Ledoux]{\label{conc}}
If $f:[-\sigma,\sigma]^n \rightarrow \R$ is a convex Lipschitz function with Lipschitz constant $L$ and $\eps$ is a mean zero random vector with independent entries in $[-\sigma,\sigma]$, then we have
\begin{equation}
\P(f(\eps) > u) \leq \exp(-u^2/8 \sigma^2 L^2). 
\end{equation}
\end{theorem}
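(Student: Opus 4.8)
\medskip
\noindent\textbf{Proof proposal.}
I read the assertion as a one-sided deviation bound about the mean, $\P\big(f(\eps) - \E f(\eps) > u\big) \le \exp\!\big(-u^2/8\sigma^2 L^2\big)$: since $\eps$ is mean zero but $f$ need not be, this is the only meaningful reading, it is the form used later in the paper, and the two-sided statement follows by applying the bound to $-f$. The plan is in two steps --- a rescaling to the unit cube, and the entropy (Herbst) method on the cube, with convexity of $f$ entering in an essential way.

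\emph{Step 1 (rescaling).} Put $w_i = (\eps_i + \sigma)/(2\sigma) \in [0,1]$ and $g(w) = f\big(2\sigma w - \sigma(1,\dots,1)\big)$. Then $g$ is convex, being an affine image of a convex function; it is Lipschitz with constant $K := 2\sigma L$ in the Euclidean norm; $g(w) = f(\eps)$ under this substitution; and $w$ has independent coordinates in $[0,1]$. So it suffices to show that for independent $W_1,\dots,W_n \in [0,1]$ and any convex $K$-Lipschitz $g$ on $[0,1]^n$ one has $\P\big(g(W) - \E g(W) > u\big) \le \exp(-u^2/2K^2)$; taking $K = 2\sigma L$ then recovers the theorem.

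\emph{Step 2 (convex Lipschitz concentration on $[0,1]^n$).} At every point $w$ the convex function $g$ has a subgradient $\xi(w)$, and $K$-Lipschitzness forces $\|\xi(w)\| \le K$. For each $i$ set $g_i^-(w) = \inf_{w_i' \in [0,1]} g(w_1,\dots,w_i',\dots,w_n)$; this depends only on the coordinates other than $i$ and satisfies $g_i^- \le g$. Applying the subgradient inequality along the $i$th coordinate at the minimizing $w_i'$ gives $g_i^-(w) \ge g(w) + \xi_i(w)(w_i' - w_i) \ge g(w) - |\xi_i(w)|$, since $|w_i' - w_i| \le 1$; hence $\sum_{i=1}^n \big(g(w) - g_i^-(w)\big)^2 \le \|\xi(w)\|^2 \le K^2$ pointwise. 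This is exactly the ``one-sided bounded differences'' hypothesis under which the entropy method --- sub-additivity of entropy over the product measure, a modified logarithmic Sobolev inequality, and Herbst's argument --- yields $\P\big(g(W) - \E g(W) > u\big) \le \exp(-u^2/2K^2)$. Combined with Step 1 this is the claim.

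\emph{Main obstacle.} Essentially all the content is in Step 2, and in particular in obtaining the sharp variance proxy $2K^2 = 8\sigma^2 L^2$ rather than a larger constant; convexity is indispensable there, since for a merely $1$-Lipschitz (non-convex) function on $[0,1]^n$ a single-coordinate change is controlled only by the bounded-differences constant $1$, and McDiarmid's inequality then gives the far weaker bound $\exp(-2u^2/n)$. An alternative to the entropy method is Talagrand's convex distance inequality: the sublevel set $\{g \le a\}$ is convex, and for a convex set the Euclidean distance is dominated by Talagrand's convex distance, which yields concentration about a \emph{median} of $g(W)$ at the cost of a slightly worse constant plus a routine median-to-mean comparison. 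In the paper itself I would simply cite~\cite{boucheron2013concentration} for the cube-level statement; the sketch above is meant to explain why the constant takes the stated form.
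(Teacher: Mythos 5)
The paper does not actually prove this statement: it is presented as a known result, with the text immediately preceding it reading ``A proof of the following theorem can be found in~\citet{boucheron2013concentration}.'' So there is no in-paper argument to compare against. Your sketch is a correct reconstruction of the standard proof of that cited result (the convex Lipschitz concentration inequality on the cube, Theorem~6.10 in Boucheron--Lugosi--Massart): the affine rescaling to $[0,1]^n$ with Lipschitz constant $K=2\sigma L$ is right and accounts exactly for the factor $8\sigma^2L^2=2K^2$; the self-bounding estimate $\sum_i\bigl(g(w)-g_i^-(w)\bigr)^2\le\|\xi(w)\|^2\le K^2$ via the subgradient inequality is the correct way convexity enters (in fact separate convexity suffices, and at boundary points one should take a subgradient of norm at most $K$, which exists by Lipschitzness); and feeding this into the one-sided entropy/Herbst bound gives $\exp(-u^2/2K^2)$ for the upper tail about the mean. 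You are also right to flag that the inequality as printed omits the centering $\E f(\eps)$ --- as literally stated it is false (take $f$ a large constant), and the intended and used form is $\P\bigl(f(\eps)>\E f(\eps)+u\bigr)\le\exp(-u^2/8\sigma^2L^2)$, which is what you prove. The Talagrand convex-distance route you mention is a legitimate alternative but, as you note, concentrates about the median with a worse constant, so the entropy method is the right choice for matching the stated bound.
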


Recall that $C_m = \{\theta \in \R^n: \theta_1 \leq \theta_2 \dots \leq
\theta_m \geq \theta_{m + 1} \dots \geq \theta_n\}$
is the collection of unimodal sequences with a mode at $m$. 
We now prove a key lemma controlling the term $\E
\big(\sup_{\theta \in C_m: \|\theta - \theta^*\| \leq t} \langle z,
\theta - \theta^* \rangle\big)$ as a function of $t$ whenever the
underlying $\theta^*$ is monotonic.
This will be crucial in finally proving Theorem~\ref{worstcase}.

The idea of the proof is as follows. The standard technique in
empirical process theory of upper bounding the expected Gaussian term
$\E \big(\sup_{\theta \in C_m: \|\theta - \theta^*\| \leq t} \langle
z, \theta - \theta^* \rangle\big)$ is to use the Dudley's entropy
integral bound, as given in Theorem~\ref{dudthm}. This requires tight
upper bounds on the covering number for the set $\{\theta \in C_m:
\|\theta - \theta^*\| \leq t\}$. Now, upper bounds can be derived
for the log-covering number of bounded sequences in $C_m$ using
Theorem~\ref{monocover}. However, a direct application of Theorem~\ref{monocover} to cover the set
$\{\theta \in C_m: \|\theta - \theta^*\| \leq t\}$ would give us loose
bounds, as $t$ can grow with $n$. Instead, we carry out an extra
peeling step. In particular, for any $\theta$ belonging to $\{\theta \in C_m: \|\theta
- \theta^*\| \leq t\}$, we define a truncated version $\theta^{'}$
belonging to the same set that is also bounded by a constant factor
$L = C \big(V(\theta^*) + \sigma\big)$.
Note that one can write
\begin{equation}\label{brk}
\E \Bigl(\sup_{\theta \in C_m: \|\theta - \theta^*\| \leq t} \langle z,
\theta - \theta^* \rangle\Bigr) \leq \E \Bigl(\sup_{\theta \in C_m: \|\theta - \theta^*\| \leq t} \langle z, \theta - \theta^{'} \rangle\Bigr) + \E \Bigl(\sup_{\theta^{'} \in C_m: \|\theta^{'} - \theta^*\| \leq t, \max_{i} |\theta^{'}_{i}| \leq L} \langle z, \theta^{'} - \theta^{*} \rangle\Bigr).
\end{equation}
The truncation is defined in such a way that the first term on the 
right side of the above inequality is small enough for our
purposes. The second term can be controlled by a direct
application of Dudley's entropy integral inequality. This extra
peeling step helps us to derive a tight risk bound.

The definition of these truncations forms a key part of our argument. We actually form 
a truncation $\theta'$ of an arbitrary unimodal sequence $\theta\in C_m$
with respect to a fixed monotone sequence $\theta^*$. The
tails of $\theta$ are raised to $\theta_1^*-L$
over $S_1^L\cup S_1^R = S_1 = \{i : \theta_i < \theta_1^* - L\}$.
In the interval $S_2 = \{i : \theta_i > \theta_m^* + L\}$ around the
mode $m$, the sequence is lowered to the level
$\theta_n^* +L$. It is crucial that we are able to choose $L$ to be a
constant factor while still maintaining the fact that the first term on
the right side of~\eqref{brk} is sufficiently small. Figure~1
shows a schematic of the construction of our truncation $\theta'$ of $\theta$; the figure
may be helpful in understanding the steps of the proof.

\begin{lemma}\label{key1}
Fix  any nondecreasing sequence $\theta^* \in \M_n$ and let $1\leq m\leq n$. Then for all $t \geq 0$,
\begin{equation}
\E \left(\sup_{\theta \in C_m: \|\theta - \theta^*\| \leq t} \langle z, \theta - \theta^* \rangle\right) \leq C \sigma \left(n^{1/4} t^{1/2} \sqrt{\big(V(\theta^*) + \sigma\big)}\right) + \frac{t^2}{8}
\end{equation}
where $V(\theta^*) = \max_{1 \leq i \leq n} \theta^*_i - \min_{1 \leq i \leq n} \theta^*_i$.
The bound also holds, by symmetry, for any nonincreasing sequence
$\theta^* \in \M^{-}_n$.
\end{lemma}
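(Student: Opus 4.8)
The plan is to bound the expected supremum by the peeling decomposition already indicated in \eqref{brk}, so the work splits into controlling a \emph{truncation bias} term and a \emph{bounded empirical process} term. First I would make the truncation $\theta'$ precise: given $\theta\in C_m$ and the nondecreasing $\theta^*$, set $L = C(V(\theta^*)+\sigma)$ and define $\theta'_i = \max\{\theta_i,\,\theta^*_1 - L\}$ on the region where $\theta$ is far below $\theta^*$ at the left tail, and $\theta'_i = \min\{\theta_i,\,\theta^*_n + L\}$ on the region around the mode where $\theta$ overshoots, leaving $\theta'$ unchanged elsewhere. One must check that $\theta'\in C_m$ (monotone-increasing up to $m$, monotone-decreasing after), which holds because clipping from below preserves monotonicity on the increasing part and clipping from above preserves it on the decreasing part, and that $\|\theta'-\theta^*\|\le\|\theta-\theta^*\|\le t$, since replacing $\theta_i$ by the value in $[\theta^*_1-L,\theta^*_n+L]$ closest to $\theta_i$ only moves it toward the interval containing all coordinates of $\theta^*$. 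Crucially $\max_i|\theta'_i|\le \max_i|\theta^*_i| + L =: L'$, a constant multiple of $V(\theta^*)+\sigma$ up to an additive $|\theta^*_1|$ that can be absorbed by re-centering (the supremum is translation-invariant in $\theta^*$, so WLOG $\theta^*_1 = 0$).

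Next I would bound the \textbf{truncation bias term} $\E\bigl(\sup_{\theta}\langle z,\theta-\theta'\rangle\bigr)$. Here $\theta-\theta'$ is supported on the ``excess'' coordinates, and on each such coordinate $|\theta_i-\theta'_i| = |\theta_i| - (\text{clip level}) \le |\theta_i - \theta^*_i| - L/2$ roughly, when $|\theta_i-\theta^*_i|\ge L$ (and is $0$ otherwise); in particular $\|\theta-\theta'\|^2 \le \sum_{i:\,|\theta_i-\theta^*_i|>L}(\theta_i-\theta^*_i)^2$. The key point is that the number of such ``violating'' coordinates is small because $\theta$ is unimodal: on the increasing stretch the coordinates below $\theta^*_1-L$ form an initial block, and on the decreasing stretch a final block, while the overshoot region is an interval around $m$ — so $\theta-\theta'$ lives in a set that is a union of a bounded number of monotone pieces each lying in a box of controlled sidelength, whose covering numbers are again governed by Theorem~\ref{monocover}. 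Combined with the crude bound that these coordinates carry at most $t^2/L^2$ of them each at distance $\le t$, a Cauchy–Schwarz/Dudley estimate shows this term is at most $t^2/16$ once $L$ is a large enough constant multiple of $V(\theta^*)+\sigma$; this is the mechanism by which the peeling ``pays for itself.'' I expect this is the \emph{main obstacle}: getting the truncation-bias term to be genuinely $O(t^2)$ with the right (small) constant, rather than something like $t\sqrt n$, is exactly where the geometry of $C_m$ (as opposed to an arbitrary set of radius $t$) has to be used, and where the schematic in Figure~1 is doing real work.

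For the \textbf{bounded empirical process term} $\E\bigl(\sup_{\theta'\in C_m,\ \|\theta'-\theta^*\|\le t,\ \max_i|\theta'_i|\le L'}\langle z,\theta'-\theta^*\rangle\bigr)$, I would apply the chaining bound of Theorem~\ref{dudthm}: it is at most $12\sigma\int_0^t\sqrt{\log N(\epsilon, \F_{m,t})}\,d\epsilon$ where $\F_{m,t}$ is the truncated feasible set. Since any $\theta'$ in this set is a concatenation of a bounded nondecreasing sequence on $\{1,\dots,m\}$ and a bounded nonincreasing sequence on $\{m+1,\dots,n\}$, each taking values in an interval of length $\le 2L' = C(V(\theta^*)+\sigma)$, Theorem~\ref{monocover} applied to each piece and a product bound give $\log N(\epsilon,\F_{m,t})\le \tfrac{C\sqrt n\,(V(\theta^*)+\sigma)}{\epsilon}$. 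Plugging this into Dudley's integral yields $12\sigma\int_0^t \bigl(C\sqrt n (V(\theta^*)+\sigma)\bigr)^{1/2}\epsilon^{-1/2}\,d\epsilon = C\sigma\, n^{1/4} t^{1/2}\sqrt{V(\theta^*)+\sigma}$, which is precisely the leading term claimed in the lemma.

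Finally I would add the two estimates: the truncation bias contributes $\le t^2/16$ (or better, $t^2/8$ after including the ``$-t^2/2$'' bookkeeping is unnecessary since the $-t^2/2$ is not present in this lemma's statement — the $+t^2/8$ on the right-hand side is exactly the truncation allowance), and the bounded process contributes $C\sigma n^{1/4}t^{1/2}\sqrt{V(\theta^*)+\sigma}$, giving the stated bound. The nonincreasing case is identical after reflecting the index set $i\mapsto n+1-i$, which maps $C_m$ to $C_{n+1-m}$ and $\M_n$ to $\M_n^-$, so no separate argument is needed. I would also remark that by the translation-invariance noted above one may assume $\theta^*_1=0$ throughout, which is what lets $L = C(V(\theta^*)+\sigma)$ rather than $C(\|\theta^*\|_\infty+\sigma)$ appear in the final bound.
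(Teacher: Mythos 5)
Your overall architecture---the clipping $\theta'$ at levels $\theta^*_1 - L$ and $\theta^*_n + L$, the decomposition \eqref{eqbrk} into a truncation-bias term and a bounded empirical process, and the Dudley--Gao--Wellner computation giving $C\sigma n^{1/4}t^{1/2}\sqrt{V(\theta^*)+\sigma}$ for the bounded process---is exactly the paper's, and your checks that $\theta'\in C_m$, that $\|\theta'-\theta^*\|\le\|\theta-\theta^*\|\le t$, and the reflection argument for the nonincreasing case are all correct. (One cosmetic difference: the paper takes the clip margin to be a constant multiple of $\sigma$ alone, $L = 128\sigma\sqrt{2/\pi}$; the truncated set then has range $V(\theta^*)+2L = O(V(\theta^*)+\sigma)$, which is all the entropy bound needs. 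Your $L = C(V(\theta^*)+\sigma)$ also works.)

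The genuine gap is the truncation-bias term, which you rightly flag as the main obstacle but do not actually close. The assertion that ``a Cauchy--Schwarz/Dudley estimate shows this term is at most $t^2/16$ once $L$ is a large enough constant multiple of $V(\theta^*)+\sigma$'' fails for the Dudley half of that disjunction: the difference $\theta-\theta'$ is supported on at most three monotone pieces of length at most $v_0 = t^2/L^2$, but their \emph{range} can be as large as $t$ (not a constant), so Theorem~\ref{monocover} plus chaining yields $\log N(\eps) \le C\sqrt{v_0}\,t/\eps$ and hence a bound of order $\sigma t^{3/2}/\sqrt{L}$, which is not $O(t^2)$ uniformly in $t$ for any constant $L$ (it is too big when $t \le \sigma^2/L$). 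What does work is a first-moment argument, and this is what the paper does in dyadic-peeling form: by unimodality of $\theta$ and monotonicity of $\theta^*$, the violating sets $S_1$ and $S_2$ are contained in the \emph{fixed}, $\theta$-independent set $T = \{1,\dots,v_0\}\cup\{n-v_0+1,\dots,n\}\cup\{m-v_0+1,\dots,m+v_0-1\}$ of cardinality at most $4v_0$. The paper slices the deviations into bands $2^jL < |\theta_i - \theta^*_{(\cdot)}| \le 2^{j+1}L$, bounds the count in band $j$ by $v_j = t^2/(2^{2j}L^2)$ and its location by end blocks and a window around $m$, and sums $\sum_j 2^{j+1}L\,\E|z_i|\,O(v_j) = O(\sigma t^2/L) \le t^2/8$ once $L \ge C\sigma$. (Your Cauchy--Schwarz suggestion can be completed in the same spirit and is even simpler: $\sup_\theta\langle z,\theta-\theta'\rangle \le t\,\|z_T\|$ with $\E\|z_T\| \le 2\sigma t/L$, again giving $2\sigma t^2/L$.) Either way, the step you must make explicit---and the place unimodality really enters---is that the support of $\theta-\theta'$ lies in a set of size $O(t^2/L^2)$ whose location is known in advance, so the supremum over $\theta$ can be replaced by a single fixed function of $z$ before taking expectations.
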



\begin{proof}
Let $K = \{\theta \in C_m: \|\theta - \theta^*\| \leq t\}$, and
define $K^{'} \subset K$ as 
\begin{equation}
K' = \left\{\theta \in K: \max_{1 \leq i \leq n} \theta_i \leq \theta^*_{n}
+ L, \min_{1 \leq i \leq n} \theta_i \geq \theta^*_{1} - L\right\}
\end{equation} where $L$ is a fixed positive number to be chosen later.
For any $\theta \in K$ we will define a truncated version of $\theta$ belonging to $K'$ which will be denoted by $\theta'$. Then we will have the inequality
\begin{equation}\label{eqbrk}
\E \sup_{\theta \in K} \langle z, \theta - \theta^{*} \rangle \leq \E \sup_{\theta \in K} \langle z, \theta - \theta' \rangle + \E \sup_{\theta' \in K'} \langle z, \theta' - \theta^{*} \rangle.
\end{equation}
Fix an arbitrary $\theta \in K$. Consider the sets 
$S_1 = \{i: \theta_i < \theta^*_{1} - L\}$ and $S_2 = \{i: \theta_i > \theta^*_{n} 
+ L\}$ 
and define $\theta'$ according to
\begin{equation}
\theta'_i = \begin{cases}
  \theta_1^* - L & \text{if $i\in S_1$} \\
  \theta_n^* + L & \text{if $i\in S_2$} \\
  \theta_i & \text{otherwise}.
\end{cases}
\end{equation}

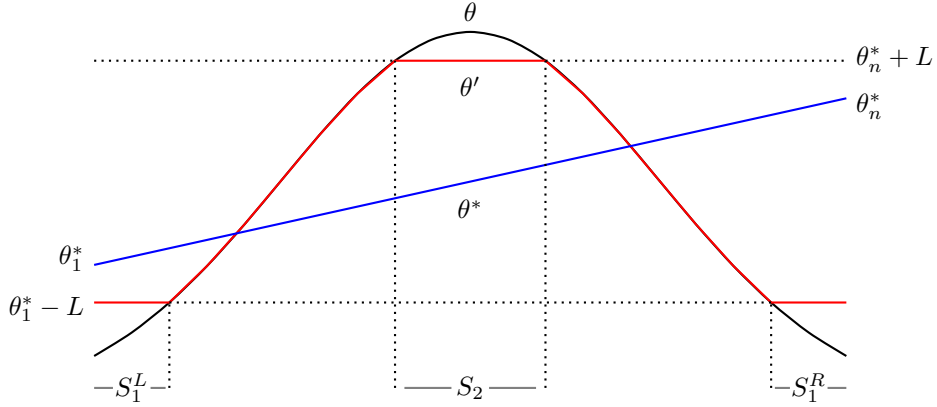
\begin{figure}
\label{fig:lemma4p2}
\begin{center}
\begin{tikzpicture}[thick, scale=2.5]
\draw [black] plot[smooth] coordinates {(-2.000000, 0.269955) (-1.800000, 0.394751) (-1.600000, 0.554604) (-1.400000, 0.748637) (-1.200000, 0.970930) (-1.000000, 1.209854) (-0.800000, 1.448458) (-0.600000, 1.666123) (-0.400000, 1.841351) (-0.200000, 1.955213) (0.000000, 1.994711) (0.200000, 1.955213) (0.400000, 1.841351) (0.600000, 1.666123) (0.800000, 1.448458) (1.000000, 1.209854) (1.200000, 0.970930) (1.400000, 0.748637) (1.600000, 0.554604) (1.800000, 0.394751) (2.000000, 0.269955) };

\draw [red] plot coordinates {(-2.000000, 0.554604) (-1.800000, 0.554604) (-1.600000, 0.554604) (-1.400000, 0.748637) (-1.200000, 0.970930) (-1.000000, 1.209854) (-0.800000, 1.448458) (-0.600000, 1.666123) (-0.400000, 1.841351) (-0.200000, 1.841351) (0.000000, 1.841351) (0.200000, 1.841351) (0.400000, 1.841351) (0.600000, 1.666123) (0.800000, 1.448458) (1.000000, 1.209854) (1.200000, 0.970930) (1.400000, 0.748637) (1.600000, 0.554604) (1.800000, 0.554604) (2.000000, 0.554604) };

\draw [dotted] plot coordinates {(-1.600000, 0.554604) (1.600000, 0.554604) };

\draw [dotted] plot coordinates {(-2.000000, 1.841351) (-0.400000, 1.841351) };

\draw [dotted] plot coordinates {(0.400000, 1.841351) (2.000000, 1.841351) };

\draw [blue] plot coordinates {(-2.000000, 0.754604) (2.000000, 1.641351) };

\node[above] at (0,2.00) {$\theta$};

\node[above] at (0,1.6) {$\theta'$};

\node[above] at (0,0.95) {$\theta^*$};

\node[right] at (2, 1.85) {$\theta_n^*+L$};

\node[right] at (2, 1.60) {$\theta_n^*$};

\node[left] at (-2, 0.79) {$\theta_1^*$};

\node[left] at (-2, 0.52) {$\theta_1^*-L$};

\draw [dotted] plot coordinates {(0.400000, 0.100000) (0.400000, 1.841351) };

\draw [dotted] plot coordinates {(-0.400000, 0.100000) (-0.400000, 1.841351) };

\draw [dotted] plot coordinates {(-1.600000, 0.100000) (-1.600000, 0.554604) };

\draw [dotted] plot coordinates {(1.600000, 0.100000) (1.600000, 0.554604) };

\draw [gray] (-.35,.1) -- (-.1, .1);
\draw [gray] (.1,.1) -- (.35, .1);
\node at (0,.1) {\footnotesize $S_2$};

\draw [gray] (-1.7,.1) -- (-1.62, .1);
\draw [gray] (-2.0,.1) -- (-1.9, .1);
\node at (-1.8,.1) {\footnotesize $S_1^L$};

\draw [gray] (1.7,.1) -- (1.62, .1);
\draw [gray] (2.0,.1) -- (1.9, .1);
\node at (1.8,.1) {\footnotesize $S_1^R$};

\end{tikzpicture}
\caption{The proof of Lemma~\ref{key1} is based on
a truncation $\theta'$ of an arbitrary unimodal sequence $\theta\in C_m$
with respect to a fixed monotone sequence $\theta^*$. The
tails of $\theta$ are raised to $\theta_1^*-L$
over $S_1^L\cup S_1^R = S_1 = \{i : \theta_i < \theta_1^* - L\}$.
In the interval $S_2 = \{i : \theta_i > \theta_n^* + L\}$ around the
mode $m$, the sequence is lowered to the level
$\theta_n^* +L$.}
\end{center}
\end{figure}

As indicated in Figure~1, $S_1$ is the union of 
a left prefix $S_1^L$ of $\{1,\ldots, n\}$ and a right suffix $S_1^R$.
It is then clear that 
$\min_{1 \leq i \leq n} \theta'_{1} \geq \theta^{*}_{1} - L$ and $\max_{1 \leq i \leq n} \theta'_{i} \leq \theta^{*}_{n} + L$
Also,  by construction of $\theta'$ it is unimodal, and we have the  contractive property 
\begin{equation}\label{contrac}
|\theta^*_i - \theta'_i| \leq |\theta^*_i - \theta_i|
\end{equation}
for any $1 \leq i \leq n$.
These facts show that $\theta' \in K'$. 

We now proceed to control the first term on the right side of the inequality in~\eqref{eqbrk}.
Using the definition of $\theta'$, we have
\begin{align}\label{eqtrunc}
\sum_{i = 1}^{n} z_i (\theta_i - \theta'_i) 
& = \sum_{i\in S_1} z_i (\theta_i - \theta'_i) + \sum_{i\in S_2} z_i (\theta_i - \theta'_i)\\
& \leq \sum_{i\in S_1} |z_i| (\theta_i' - \theta_i) + \sum_{i\in S_2} |z_i| (\theta_i - \theta'_i)\\
& = \sum_{i \in S_1} \sum_{j = 0}^{\infty} |z_i| (\theta_i' - \theta_i) \:\:\I\{2^j L < \theta^{*}_1 - \theta_i \leq 2^{j + 1} L\}
\\
&\qquad +  \sum_{i \in S_2} \sum_{j = 0}^{\infty} |z_i| (\theta_i -
\theta'_i) \:\:\I\{2^j L < \theta_i - \theta^*_{n} \leq 2^{j + 1} L\}
 \\
& \leq \sum_{j = 0}^{\infty} 2^{j + 1} L \sum_{i \in S_1}
 |z_i|\:\:\I\{2^j L < \theta^{*}_1 - \theta_i \leq 2^{j + 1} L\} 
\\
& \qquad 
+  \sum_{j = 0}^{\infty} 2^{j + 1} L \sum_{i \in S_2} |z_i|\:\:\I\{2^j L < \theta_i - \theta^*_{n} \leq 2^{j + 1} L\}
\end{align}
where $\I$ denotes the indicator function and the last inequality follows from the inequalities
\begin{align}
\theta_i' - \theta_i \leq \theta_1^* - \theta_i & \;\; \text{for $i\in S_1$}\\[5pt]
\theta_i - \theta_i' \leq \theta_i - \theta_n^* & \;\; \text{for $i\in S_2$}.
\end{align}
We now note that for any $\theta \in K$, since $\|\theta - \theta^{*}\| \leq
t$,
\begin{align}\label{l2con}
\left| \bigl\{ i : 2^j L < |\theta_i - \theta_i^*| \bigr\}\right| \leq \frac{t^2}{2^{2j} L^2} \equiv v_j.
\end{align}
Also note that if $i\in S_1$ then $\theta_1^* - \theta_i > 2^j L$
implies that $\theta_i^* - \theta_i > 2^j L$, since $\theta^*$ is
monotonic. Therefore,
\begin{equation}
\left|\bigl\{ i \in S_1 : 2^j L < \theta^*_1 - \theta_i\bigr\}\right| \leq v_j.
\end{equation}

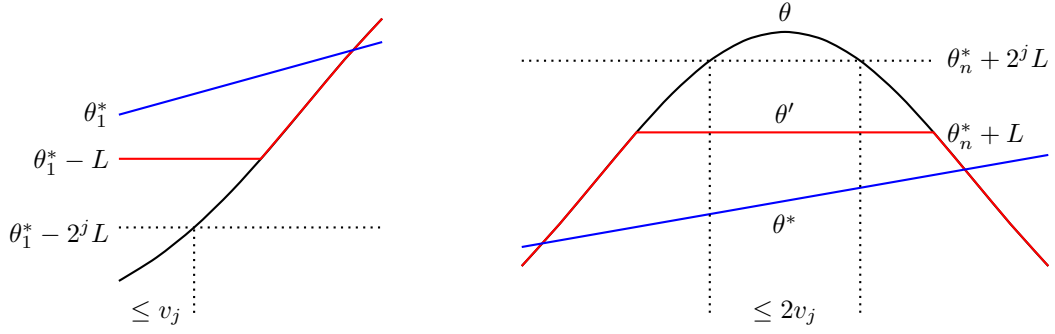
\begin{figure}
\begin{center}
\begin{tabular}{cc}
\qquad\qquad 
\begin{tikzpicture}[thick, scale=2.5]
\draw [black] plot[smooth] coordinates {(-2.000000, 0.269955) (-1.800000, 0.394751) (-1.600000, 0.554604) (-1.400000, 0.748637) (-1.200000, 0.970930) (-1.000000, 1.209854) (-0.800000, 1.448458) (-0.600000, 1.666123)  };
\draw [red] plot coordinates {(-2.000000, 0.92) (-1.800000, 0.92) (-1.600000, 0.92) (-1.400000, 0.92) (-1.2450000, 0.92) (-1.000000, 1.209854) (-0.800000, 1.448458) (-0.600000, 1.666123)  };
\draw [dotted] plot coordinates {(-2.000000, 0.554604) (-0.600000, 0.554604) };
\draw [blue] plot coordinates {(-2.000000, 1.154604) (-.6000000, 1.541351) };
\node[left] at (-2, 1.155) {$\theta_1^*$};
\node[left] at (-2, 0.90) {$\theta_1^*- L$};
\node[left] at (-2, 0.52) {$\theta_1^*- 2^jL$};
\draw [dotted] plot coordinates {(-1.600000, 0.100000) (-1.600000, 0.554604) };
\node at (-1.8,.1) {\footnotesize $\leq v_j$};
\end{tikzpicture}
&
\qquad\qquad 
\begin{tikzpicture}[thick, scale=2.5]
\draw [black] plot[smooth] coordinates {(-1.400000, 0.748637) (-1.200000, 0.970930) (-1.000000, 1.209854) (-0.800000, 1.448458) (-0.600000, 1.666123) (-0.400000, 1.841351) (-0.200000, 1.955213) (0.000000, 1.994711) (0.200000, 1.955213) (0.400000, 1.841351) (0.600000, 1.666123) (0.800000, 1.448458) (1.000000, 1.209854) (1.200000, 0.970930) (1.400000, 0.748637) };
\draw [red] plot coordinates {(-1.400000, 0.748637) (-1.200000, 0.970930) (-1.000000, 1.209854) (-0.79, 1.46) (-0.600000, 1.46) (-0.400000, 1.46) (-0.200000, 1.46) (0.000000, 1.46) (0.200000, 1.46) (0.400000, 1.46) (0.600000, 1.46) (0.790000, 1.46) (1.000000, 1.209854) (1.200000, 0.970930) (1.400000, 0.748637) };
\draw [dotted] plot coordinates {(-1.400000, 1.841351) (0.79, 1.841351) };
\draw [blue] plot coordinates {(-1.400000, 0.85) (1.400000, 1.34) };
\node[above] at (0,2.00) {$\theta$};
\node[above] at (0,1.46) {$\theta'$};
\node[above] at (0,0.87) {$\theta^*$};
\node[right] at (0.81, 1.85) {$\theta_n^*+2^jL$};
\node[right] at (0.81, 1.44) {$\theta_n^*+L$};
\draw [dotted] plot coordinates {(0.400000, 0.50) (0.400000, 1.841351) };
\draw [dotted] plot coordinates {(-0.400000, 0.50) (-0.400000, 1.841351) };
\node at (0,.5) {\footnotesize $\leq 2v_j$};
\end{tikzpicture}
\end{tabular}
\caption{The 
set $\bigl\{i \in S_1: 2^j L < \theta^*_1 - \theta_i \bigr\}$ is
the union of at most two intervals. Each has size no larger than
$v_j$. The figure on the left indicates the left interval.
Similarly, by unimodality, the set
$\bigl\{ i \in S_2 : 2^j L < \theta_i - \theta^*_n\bigr\}$
if it is nonempty, is an interval of length no greater than
$2v_j$.}
\end{center}
\end{figure}

Now observe that since $\theta$ is unimodal, any set of the form $\{i:
\theta_i < a\}$ for some number $a$ is necessarily a union of at most
two intervals; see Figure~2.  Therefore,
\begin{align}
\bigl\{i \in S_1: 2^j L < \theta^{*}_1 - \theta_i \leq 2^{j + 1} L\bigr\} 
& \subseteq \bigl\{i \in S_1: 2^j L < \theta^*_1 - \theta_i \bigr\} \\
& \subseteq  \{1,\ldots, v_j\} \cup \{n-v_j+1, n-v_j+2, \ldots, n\},
\end{align}
since each interval must have size no greater than $v_j$.
Similarly, we have that 
\begin{equation}
\left|\bigl\{ i \in S_2 : 2^j L < \theta_i - \theta^*_n\bigr\}\right| \leq v_j.
\end{equation}
Since $\theta \in C_m$ is unimodal, 
any set of the form $\{i: \theta_i > a\}$ 
for some number $a$ is necessarily an interval containing $m$, if it is
nonempty. Therefore, we have that
\begin{align}
\bigl\{i \in S_2: 2^j L < \theta_i - \theta^{*}_n \leq 2^{j + 1} L\bigr\}
&\subseteq \bigl\{i \in S_2: 2^j L< \theta_i - \theta^{*}_n \bigr\} \\
&\subseteq \bigl\{m - v_j + 1, \ldots, m + v_j - 1\bigr\}.
\end{align}
We conclude that
\begin{equation}
\sum_{i = 1}^{n} (\theta_i - \theta'_i) z_i \leq \sum_{j = 0}^{\infty} 2^{j + 1} L \left(\sum_{i = 1}^{v_j} |z_i| + \sum_{i = m - v_j + 1}^{m + v_j - 1} |z_i| + \sum_{i = n - v_j + 1}^{n} |z_i|\right).
\end{equation}
Note that this upper bound does not depend on
$\theta$, and 
that $\theta$ was an arbitrary element of $K$. Thus, using the fact $\E |z_i| = \sigma \sqrt{2/\pi}$
and~\eqref{l2con}, we arrive at 
\begin{align}
\E\left( \sup_{\theta \in K} \sum_{i = 1}^{n} (\theta_i - \theta'_i)
z_i\right) & \leq 4 L \sigma \sqrt{2/\pi} \sum_{j = 0}^{\infty} 2^{j + 1}  v_j 
 = \frac{ 4 t^2 \sigma \sqrt{2/\pi}}{L} \sum_{j = 0}^{\infty} 2^{1 - j} 
=  \frac{16 t^2 \sigma \sqrt{2/\pi}}{L}.
\end{align}
We now set $L = 128 \sigma \sqrt{2/\pi}$ to finally obtain
\begin{equation}\label{app2}
\E \left(\sup_{\theta \in K} \langle z,\theta - \theta' \rangle \right)\leq \frac{t^2}{8}.
\end{equation}



To control the second term on the right side of~\eqref{eqbrk} we set
$\A = K'$ and $\delta = 0$ in the chaining result~\eqref{dudthm} to obtain
\begin{equation}\label{dudcal}
\E \left(\sup_{\theta' \in K'} \langle z, \theta' - \theta^{*} \rangle\right)\leq 12 \sigma \int_{0}^{t} \sqrt{\log N(\eps,K'})\;d\eps .  
\end{equation}
By definition of $K'$ we can now apply Theorem~\ref{monocover}
with $a = \theta^*_{1} - L$ and $b = \theta^*_{n} + L$ to obtain 
\begin{equation}
\log N(\eps,K') \leq C \sqrt{n} \frac{\big(V(\theta^*) + 256
  \sigma \sqrt{2/\pi} \big)}{\eps}.
\end{equation}
Using~\eqref{dudcal} and integrating the above expression gives us,
for an appropriate constant $C$, 
\begin{equation}\label{eststep}
\E \left(\sup_{\theta' \in K'} \langle z, \theta' - \theta^{*} \rangle
\right)\leq C \sigma n^{1/4} t^{1/2} \sqrt{\big(V(\theta^*) + \sigma\big)}.
\end{equation}
Combining the last equation with~\eqref{app2} and~\eqref{eqbrk} finishes the proof of the lemma.
\end{proof}

We now prove a lemma that controls the expected Gaussian supremum where the supremum is taken over all
unimodal sequences.
\begin{lemma}\label{key2}
Fix a positive integer $n$ and a nondecreasing or nonincreasing sequence $\theta^*$. For all $t \geq 0$,
\begin{equation}
\E \left(\sup_{\theta \in \U_n: \|\theta - \theta^*\| \leq t} \langle z, \theta - \theta^* \rangle\right) \leq C \sigma \left(n^{1/4} t^{1/2} \sqrt{\big(V(\theta^*) + \256 \sigma\big)} + t \sqrt{\log n}\right) + \frac{t^2}{8}.
\end{equation}
\end{lemma}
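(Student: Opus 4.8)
The plan is to reduce the supremum over the nonconvex set $\U_n=\bigcup_{m=1}^{n}C_m$ to the per-cone estimate of Lemma~\ref{key1}, paying only a $\sqrt{\log n}$ price for the maximum over the $n$ cones. Write
\[
\sup_{\theta\in\U_n:\,\|\theta-\theta^*\|\le t}\langle z,\theta-\theta^*\rangle=\max_{1\le m\le n}W_m(z),\qquad W_m(z):=\sup_{\theta\in C_m:\,\|\theta-\theta^*\|\le t}\langle z,\theta-\theta^*\rangle,
\]
with the convention $W_m(z)=-\infty$ when the feasible set is empty. Since $\theta^*$ is monotone it lies in $C_n$ (if nondecreasing) or $C_1$ (if nonincreasing), so at least one $W_m$ is nonnegative and the maximum is a genuine finite quantity; it suffices to bound $\E\max_m W_m$, where the maximum runs over the (deterministic, nonempty) set of indices $m$ with $\{\theta\in C_m:\|\theta-\theta^*\|\le t\}\ne\emptyset$.

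First I would bound the means. For each such $m$, Lemma~\ref{key1} applies, since $\theta^*$ is nondecreasing or nonincreasing by hypothesis, and gives
\[
\E W_m\le C\sigma\,n^{1/4}t^{1/2}\sqrt{V(\theta^*)+\sigma}+\frac{t^2}{8},
\]
a bound that is \emph{uniform in $m$}; hence $\max_m\E W_m$ is bounded by the same right-hand side. Next I would establish concentration of each $W_m$ about its mean: for fixed $z$, choosing a near-maximizer $\theta_z\in C_m$ with $\|\theta_z-\theta^*\|\le t$ gives $W_m(z)-W_m(z')\le\langle z-z',\theta_z-\theta^*\rangle\le t\,\|z-z'\|$, so $W_m$ is $t$-Lipschitz as a function of $z$. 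Gaussian concentration (Theorem~\ref{gaussconc}, with Lipschitz constant $L=t$) then yields the sub-Gaussian tail $\P(W_m\ge\E W_m+u)\le\exp(-u^2/(2\sigma^2t^2))$ for all $u\ge0$; for bounded errors one instead invokes Ledoux's inequality (Theorem~\ref{conc}), which is available because each $W_m$, being a supremum of linear functionals of the noise vector, is convex in addition to $t$-Lipschitz.

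Finally, using $W_m=\E W_m+(W_m-\E W_m)$ and $\max_m(a_m+b_m)\le\max_m a_m+\max_m b_m$,
\[
\E\max_m W_m\le\max_m\E W_m+\E\max_m\bigl(W_m-\E W_m\bigr),
\]
and the standard maximal-inequality computation, integrating $\P\bigl(\max_m(W_m-\E W_m)>s\bigr)\le\min\{1,\,n\exp(-s^2/(2\sigma^2t^2))\}$ over $s\ge0$ and splitting at $s_0=\sigma t\sqrt{2\log n}$, gives $\E\max_m(W_m-\E W_m)\le C\sigma t\sqrt{\log n}$. Combining the three displays and renaming the universal constant yields the stated bound, the additive $t^2/8$ term being incurred only once (from Lemma~\ref{key1}). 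The step demanding the most care is the first: the key point is that the per-cone bound of Lemma~\ref{key1} is uniform in the mode $m$, so passing to the maximum over cones is free at the level of expectations, while the fluctuations of the $n$ suprema about their means produce exactly the extra $\sqrt{\log n}$ factor appearing in the statement; beyond this bookkeeping there is no genuine obstacle.
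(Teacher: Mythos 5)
Your proposal is correct and follows essentially the same route as the paper: decompose the supremum over $\U_n$ as a maximum of the per-cone suprema $X_m(t)$, bound each mean uniformly in $m$ via Lemma~\ref{key1}, use the $t$-Lipschitz property (Lemma~\ref{gaulip}) with Gaussian concentration to get sub-Gaussian fluctuations, and absorb the maximum over the $n$ cones into a $C\sigma t\sqrt{\log n}$ term via the standard maximal inequality (Lemma~\ref{subg} in the paper). Your extra care about possibly empty feasible sets for small $t$ is a minor refinement the paper glosses over, but it does not change the argument.
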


\begin{proof}
We prove the lemma when $\theta^* \in \M_n$ is a nondecreasing
sequence; the proof when $\theta^*$ is nonincreasing is analogous. For each $1 \leq m \leq n$ and $t > 0$ define the random variables
\begin{equation}\label{def1}
X_m(t) = \sup_{\theta \in \C_m: \|\theta - \theta^*\| \leq t} \langle z, \theta - \theta^* \rangle.
\end{equation}
We first note that 
\begin{equation}\label{def2}
\sup_{\theta \in \U_n: \|\theta - \theta^*\| \leq t} \langle z, \theta - \theta^* \rangle = \max_{1 \leq m \leq n} X_m(t).
\end{equation}
Applying Lemma~\ref{gaulip} we see that the random variables $X_m(t)$
are Lipschitz functions of $z$ with Lipschitz constant $t$. Hence,
using the Gaussian concentration result given in Theorem~\ref{gaussconc} 
we get for all $x > 0$ and all $1 \leq m \leq n$,
\begin{equation}
\P\left(X_m(t) \leq \E X_m(t) + t \sigma x \right) \leq \exp\left(-\frac{x^2}{2}\right).
\end{equation}
A standard argument involving maxima of random variables with
sub-Gaussian tails is given in Lemma~\ref{subg}. Using this lemma and the
last equation we get that, for a universal constant $C$,
\begin{equation}
\E \max_{1 \leq m \leq n} X_m(t) \leq \max_{1 \leq m \leq n} \E X_m(t) + C \sigma t \sqrt{\log n}.
\end{equation}
Applying Lemma~\ref{key1} to the term $\max_{m} \E
X_m(t)$ completes the proof of the lemma.
\end{proof}

We are now ready to prove Theorem~\ref{worstcase}.
\begin{proof}[Proof of Theorem~\ref{worstcase}]
We start with the basic inequality $\|\hat{\theta} - \theta^*\|^2 \leq 2 \langle z, \hat{\theta} - \theta^*\rangle$ and rewrite it as
\begin{equation}\label{basic1}
\frac{1}{2} \|\hat{\theta} - \theta^*\| \leq \frac{1}{\|\hat{\theta} - \theta^*\|} \langle z, \hat{\theta} - \theta^*\rangle.
\end{equation}
This pointwise inequality follows from the fact that $\hat{\theta},$
among all $\theta \in \U$, maximizes the expression $$ g(\theta) =
\langle z, \theta - \theta^* \rangle - \|\theta - \theta^*\|^2/2.$$
Since $\theta^* \in \U$, we then have $g(\hat{\theta}) \geq
g(\theta^*) = 0$ which is equivalent to~\eqref{basic1}. Now, an
application of the Cauchy-Schwarz inequality to~\eqref{basic1} implies 
\begin{equation}\label{trivialbd}
\frac{1}{2} \|\hat{\theta} - \theta^*\| \leq \|z\|.
\end{equation}
Define the event $\A_1 = \{\|z\|^2 < 5n\}$. By a standard tail inequality for chi squared random variables \citep{laurent2000adaptive}, one can show that 
\begin{equation}\label{chisqp}
P(\A_1) \geq 1 - \exp(-n).
\end{equation}

The following argument conditions on the event $\A_1$ throughout.
For any $t > 0$ we can write
\begin{align}\label{basic2}
\frac{1}{2} \|\hat{\theta} - \theta^*\| & \leq \frac{t}{2} + \frac{1}{\|\hat{\theta} - \theta^*\|} \langle z, \hat{\theta} - \theta^*\rangle \I\{t < \|\hat{\theta} - \theta^*\| < \sqrt{5n}\}\\
\label{basic3}
& \leq \frac{t}{2} + \underbrace{\sup_{\theta \in \U: t < \|\theta - \theta^*\| \leq \sqrt{5n}} \frac{\langle z, \theta - \theta^*\rangle}{\|\theta - \theta^*\|}}_{H_t}. 
\end{align}
We will now control the random variable $H_t$ defined above for any $t \geq 1$. Writing $H_t$ as 
\begin{equation*}
H_t \leq \sup_{0 \leq l < \log_2(\sqrt{5n})} \sup_{\theta \in \U:2^l t < \|\theta - \theta^*\| \leq 2^{l + 1}t} \frac{\langle z, \theta - \theta^*\rangle}{\|\theta - \theta^*\|},
\end{equation*}
we now claim that instead of dividing the inner product term by the
variable factor $\|\theta - \theta^*\|$,
we can instead divide by the constant factor $2^{l} t$. An
application of Lemma~\ref{supkhela}, stated and proved in the
appendix, makes this precise and we can thus write
\begin{align}
H_t &\leq \sup_{0 \leq l < \log_2(\sqrt{5n})} \,\; \sup_{\theta \in \U:2^l
  t < \|\theta - \theta^*\| \leq 2^{l + 1}t} \frac{\langle z, \theta -
  \theta^*\rangle}{2^{l} t} \\
& \leq \frac{2}{t} \sup_{0 \leq l < \log_2(\sqrt{5n})} \,\; \sup_{\theta \in \U: \|\theta - \theta^*\| \leq 2^{l + 1} t} \frac{\langle z, \theta - \theta^*\rangle}{2^{l + 1}}\\
& = \frac{2}{t} \sup_{0 \leq l < \log_2(\sqrt{5n})} \,\; \sup_{w_l \in \U: \|w_l - w^*_l\| \leq t} \langle z, w_l - w^*_l \rangle
\end{align}
where $w^*_l = \theta^*/(2^{l + 1})$.
The random variables $W_l = \sup_{w_l \in \U: \|w_l - w^*_l\| \leq t} \langle z, w_l - w^*_l \rangle$ are Lipschitz functions of $z$ with Lipschitz constant $t$ as can be seen by applying Lemma~\ref{gaulip}. Therefore for each fixed integer $0 \leq l \leq \log_2(\sqrt{5n})$ the Gaussian concentration result in Theorem~\ref{gaussconc} shows for any $x > 0$,
\begin{equation*}
\P(W_l \leq \E W_l + t \sigma x) \geq 1 - \exp(-x^2/2).
\end{equation*}
Now define the event 
$$\A_2 = \Bigl\{\max_{0 \leq l \leq \log_2(\sqrt{5n})} W_l \leq
\max_{0 \leq l \leq \log_2(\sqrt{5n})} \E W_l + t \sigma x\Bigr\}.
$$
A union bound over $0 \leq l \leq \log_2(\sqrt{5n})$ then gives us
\begin{equation*}
\P(\A_2) \geq 1 - (1+ \log_2(\sqrt{5n})) \exp(-x^2/2).
\end{equation*}
Now, for any $0 \leq l \leq \log_2(\sqrt{5n})$ we can break up $\sup_{w_l \in \U: \|w_l - w^*_l\| \leq t} \langle z, w_l - w^*_l \rangle$ into two parts; the first part where $w^*$ is increasing and then 
the second part where $w^*$ is decreasing. We apply Lemma~\ref{key2} to both the parts to obtain
\begin{equation*}
\E W_l \leq C \sigma \left(n^{1/4} t^{1/2} \sqrt{\big(V(w^*_l) + \256 \sigma\big)} + t \sqrt{\log n}\right) + \frac{t^2}{8}.
\end{equation*}
Note that since $w^*_l = \theta^*/2^{l + 1}$ we have $V(w^*_l) \leq V(\theta^*)$ and hence we can write for all $t \geq 0,$
\begin{equation*}
\max_{0 \leq l \leq \log_2(\sqrt{5n})} \E W_l \leq C \sigma \left(n^{1/4} t^{1/2} \sqrt{\big(V(\theta^*) + \256 \sigma\big)} + t \sqrt{\log n}\right) + \frac{t^2}{8}.
\end{equation*}
We thus conclude that conditioned on the event $\A_2$, for all $t \geq 0$,
\begin{align}
H_t & \leq \frac{2}{t} \left(C \sigma \left(n^{1/4} t^{1/2} \sqrt{\left(V(\theta^*) + \256 \sigma\right)} + t \sqrt{\log n}\right) + \frac{t^2}{8} + t \sigma x\right)\\
 & = C \sigma \left(n^{1/4} t^{-1/2} \sqrt{\left(V(\theta^*) + \256 \sigma\right)} +  \sqrt{\log n}\right) + \frac{t}{4} + 2 \sigma x.
\end{align}
Together with \eqref{basic3} we then obtain the upper bound 
\begin{equation*}
\frac{1}{2} \|\hat{\theta} - \theta^*\| \leq \frac{t}{2} + C \sigma \left(n^{1/4} t^{-1/2} \sqrt{\left(V(\theta^*) + \256 \sigma\right)} +  \sqrt{\log n}\right) + \frac{t}{4} + 2 \sigma x
\end{equation*}
for all $t \geq 0$.
Optimizing over $t$ by setting
$$t = \max\{C \sigma^{2/3} \left(V(\theta^*) + \256
\sigma\right)^{1/3} n^{1/6},1\}$$ 
then gives us the desired bound on
$\|\hat{\theta} - \theta^*\|$. This bound holds on the set $\A_1 \cap
\A_2$. For any fixed $\alpha > 0$, set 
\begin{equation*}
x = \sqrt{2 \log(1+\log_2(\sqrt{5n})) + 2\alpha \log n}
\end{equation*} 
to conclude that
$P(\A_2) \geq 1 - n^{- \alpha}$. Also by~\eqref{chisqp} we then can
conclude $P(\A_1 \cap \A_2) \geq 1 - n^{- \alpha} - \exp(-n) \geq 1 - 2n^{-\alpha}$. Finally,
one can use the standard inequality $(a + b + c)^2 \leq 3 (a^2 + b^2 +
c^2)$ for nonnegative numbers $a,b,c$ to convert the bound on
$\|\hat{\theta} - \theta^*\|$ into a bound on $\|\hat{\theta} -
\theta^*\|^2$, completing the proof of the theorem.
\end{proof}

\section{Adaptive Risk Bound}
The goal of this section is to prove Theorem~\ref{adaptive}. The
main technical tool is Lemma~\ref{chapro}, used to show
that the local Gaussian width term $\sup_{\theta \in \U_n:\|\theta -
  \theta^*\| \leq t} \langle z, \theta - \theta^* \rangle$
scales only logarithmically with $n$ when the true $\theta^*$ is piecewise
constant with few pieces. This is the content of our next lemma when
$\theta^*$ is monotone increasing or decreasing.  

\begin{lemma}\label{lemmaadap}
Let $\theta^* \in \R^n$ be a monotone nondecreasing sequence in $\M_n$
with a constant number of pieces $s$. Fix any $\alpha > 0$. Then the following upper bound holds simultaneously for all $t \geq 0$ with probability not less than $1 - 2 n^{-\alpha}$:
\begin{equation*}
\sup_{\theta \in \U_n:\|\theta - \theta^*\| \leq t} \langle z, \theta - \theta^* \rangle  \leq 2 t \sigma \sqrt{s \log\Bigl(\frac{en}{s}\Bigr)} +  2 t \sigma \sqrt{2 s (\alpha + 2) \log n}.
\end{equation*}
\end{lemma}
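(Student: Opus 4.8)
The plan is to combine the mode-slicing idea with the classical statistical-dimension estimate for the monotone cone. Fixing the mode first, since $\U_n=\bigcup_{m=1}^n C_m$ we have
\begin{equation*}
\sup_{\theta\in\U_n:\,\|\theta-\theta^*\|\le t}\langle z,\theta-\theta^*\rangle=\max_{1\le m\le n}\ \sup_{\theta\in C_m:\,\|\theta-\theta^*\|\le t}\langle z,\theta-\theta^*\rangle.
\end{equation*}
The key structural point is that, because $\theta^*$ is monotone \emph{and} piecewise constant with $s$ pieces, the displacement set $\{\theta-\theta^*:\theta\in C_m\}$ lies in a closed convex cone $D_m$ that is a direct sum of monotone cones. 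Splitting $\{1,\dots,n\}$ at $m$ into $I_1=\{1,\dots,m\}$ and $I_2=\{m+1,\dots,n\}$, any $\theta\in C_m$ is nondecreasing on $I_1$ and nonincreasing on $I_2$; restricting $\theta^*$ to $I_1$ and to $I_2$ partitions $\{1,\dots,n\}$ into at most $s+1$ maximal index intervals $B$ on which $\theta^*$ is constant (cutting at $m$ splits at most one block of $\theta^*$), and on each such $B$ the vector $(\theta-\theta^*)|_B$ is a monotone sequence minus a constant, hence nondecreasing if $B\subseteq I_1$ and nonincreasing if $B\subseteq I_2$. Taking $D_m$ to be the direct sum over these blocks of the corresponding monotone/anti-monotone cones gives $\{\theta-\theta^*:\theta\in C_m\}\subseteq D_m$, and therefore
\begin{equation*}
\sup_{\theta\in C_m:\,\|\theta-\theta^*\|\le t}\langle z,\theta-\theta^*\rangle\le\sup_{v\in D_m:\,\|v\|\le t}\langle z,v\rangle=t\,\|\Pi_{D_m}z\|,
\end{equation*}
where $\Pi_{D_m}$ is the Euclidean projection onto $D_m$ and the last step is the standard identity $\sup_{v\in K,\,\|v\|\le t}\langle z,v\rangle=t\|\Pi_K z\|$ valid for any closed convex cone $K$. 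Since $t$ is now only a prefactor, it suffices to bound $\max_{1\le m\le n}\|\Pi_{D_m}z\|$ by the claimed quantity on a single event independent of $t$, which delivers the ``simultaneously for all $t\ge 0$'' conclusion.

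Because $D_m$ is a direct sum over disjoint coordinate blocks, $\|\Pi_{D_m}z\|^2=\sum_B\rho_B^2$, a sum over the at most $s+1$ blocks $B$, where $\rho_B$ is the norm of the projection of the restriction of $z$ to $B$ onto a monotone, or anti-monotone, cone of dimension $|B|$. I would then invoke the classical fact (see \cite{amelunxen2014living}, and the tangent-cone computation in \cite{chatterjee2015risk}) that the statistical dimension of the monotone cone in $\R^d$ is $\sum_{k=1}^d 1/k\le\log(ed)$, so $\E\rho_B^2\le\sigma^2\log(e|B|)$ and hence $\E\|\Pi_{D_m}z\|^2\le\sigma^2\sum_B\log(e|B|)$. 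Since $\sum_B|B|=n$ and there are at most $s+1$ blocks, Jensen's inequality and the monotonicity of $r\mapsto r\log(en/r)$ give $\E\|\Pi_{D_m}z\|^2\le\sigma^2(s+1)\log\!\bigl(en/(s+1)\bigr)\le 2\sigma^2 s\log(en/s)$, the main term.

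For the high-probability statement I would union-bound over blocks rather than argue about $\max_m$ directly. Every block that can arise, for any $m$, is an index interval $[a,b]\subseteq\{1,\dots,n\}$, of which there are at most $n^2$; for each, the projection norm onto the monotone cone and the projection norm onto the anti-monotone cone on $[a,b]$ are $1$-Lipschitz functions of $z$ (by Lemma~\ref{gaulip}), so Gaussian concentration (Theorem~\ref{gaussconc}) bounds each by $\sigma\sqrt{\log(e(b-a+1))}+\sigma y$ with probability at least $1-e^{-y^2/2}$. Choosing $y=\sqrt{2(\alpha+2)\log n}$ and taking a union bound over the at most $2n^2$ such events produces an event $\mathcal G$ with $\P(\mathcal G)\ge 1-2n^2 n^{-(\alpha+2)}=1-2n^{-\alpha}$. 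On $\mathcal G$, for every $m$,
\begin{equation*}
\|\Pi_{D_m}z\|^2=\sum_B\rho_B^2\le\sum_B\bigl(\sigma\sqrt{\log(e|B|)}+\sigma y\bigr)^2\le 2\sigma^2(s+1)\bigl(\log(en/(s+1))+y^2\bigr),
\end{equation*}
hence $\|\Pi_{D_m}z\|\le\sigma\sqrt{2(s+1)\log(en/(s+1))}+\sigma y\sqrt{2(s+1)}\le 2\sigma\sqrt{s\log(en/s)}+2\sigma\sqrt{2s(\alpha+2)\log n}$, using $s+1\le 2s$ and $\log(en/(s+1))\le\log(en/s)$. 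Combined with the reduction of the first paragraph, this is precisely the lemma, uniformly in $t$, on $\mathcal G$.

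I expect the main obstacle to be the first step: seeing that slicing the nonconvex set $\U_n$ by the mode location and simultaneously slicing the monotone $\theta^*$ along its $s$ constant blocks reduces the problem to a union over $m$ of the well-understood ``direct sum of monotone cones'' situation. The inclusion $\{\theta-\theta^*:\theta\in C_m\}\subseteq D_m$ genuinely uses that $\theta^*$ is piecewise constant; for a general monotone $\theta^*$ the difference $(\theta-\theta^*)|_B$ need not be monotone on a block. Everything after that is the standard statistical-dimension estimate of \cite{chatterjee2015risk} together with a routine polynomial union bound, and the only real care is in tracking the constants so that the probability comes out to exactly $1-2n^{-\alpha}$ and the coefficients match the statement.
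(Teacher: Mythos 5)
Your proof is correct and follows essentially the same route as the paper's: slice by the mode $m$, use that $\theta^*$ is constant on each block so the per-block displacements lie in monotone (or anti-monotone) cones, invoke the statistical-dimension bound $\log(e|B|)$ for the monotone cone, and apply Gaussian concentration for $1$-Lipschitz projection/supremum functionals with $x^2 = 2(\alpha+2)\log n$ and a polynomial union bound. The only cosmetic differences are that you express the block-wise bound as $t\,\|\Pi_{D_m}z\|$ via the cone-projection identity (which the paper itself records in its appendix) and union-bound over all $O(n^2)$ intervals rather than over (mode, block) pairs, arriving at the same constants and the same probability $1-2n^{-\alpha}$.
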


\begin{proof}
Note that $\sup_{\theta \in \U_n:\|\theta - \theta^*\| \leq t} \langle z, \theta - \theta^* \rangle  = \max_{1 \leq m \leq n} X_{m}(t)$ where $X_{m}(t)$ is defined as
\begin{equation*}
X_m(t) = \sup_{\theta \in C_m:\|\theta - \theta^*\| \leq t} \langle z, \theta - \theta^* \rangle. 
\end{equation*}
First we control the term $X_{m}(t)$ for each $1 \leq m \leq
n$. Fixing a mode location $m$ we have
\begin{equation}\label{firstsecond}
X_{m}(t) \leq \sup_{\theta \in \M_{m}: \|\theta - \theta^*_{1:m}\| \leq t} \langle z_{1:m}, \theta - \theta^*_{1:m}\rangle + \sup_{-\theta \in \M_{n - m}: \|\theta - \theta^*_{(m + 1):n}\| \leq t} \langle z_{(m + 1):n}, \theta - \theta^*_{(m + 1):n}\rangle 
\end{equation}
where it should be understood that the first term on the right side of
the above inequality involves the first $m$ coordinates of the
relevant vectors and the last term involves the last $n - m$
coordinates of the relevant vectors. Both the terms on the right side
of the last inequality can be controlled similarly so let us
demonstrate how to control the first term. Let $\theta^*_{1:m}$ have
$s_1$ constant pieces. Let us denote the blocks where $\theta^*_{1:m}$
is constant by $B_{1},B_{2},\dots,B_{s_1}$. Note that these blocks
necessarily are intervals. For any vector $a$ let $a_{B_{i}}$ denote
the $|B_{i}|$ dimensional vector which is $a$ restricted to the
coordinates in block $B_{i}$. Equipped with this notation we can now
write
\begin{align}
\sup_{\theta \in \M_{m}: \|\theta - \theta^*_{1:m}\| \leq t} \langle
z_{1:m}, \theta - \theta^*_{1:m} \rangle 
& \leq \sup_{\alpha \in \R_{+}^{s}: \|\alpha\| \leq t} \Bigl(\sum_{i = 1}^{s_1} \sup_{\theta \in \M_{|B_i|}: \|\theta_{B_i} - \theta^*_{B_i}\| \leq \alpha_i} \langle z_{B_i}, \theta_{B_i} - \theta^*_{B_i}\rangle\Bigr)\\
& = \sup_{\alpha \in \R_{+}^{s}: \|\alpha\| \leq t} \Bigl(\sum_{i = 1}^{s_1} \sup_{v \in \M_{|B_i|}: \|v\| \leq \alpha_i} \langle z_{B_i},v\rangle\Bigr)\\
\label{mid1}
&= \sup_{\alpha \in \R_{+}^{s}: \|\alpha\| \leq t} \Bigl(\sum_{i = 1}^{s_1} \alpha_i \sup_{v \in \M_{|B_i|}: \|v\| \leq 1} \langle z_{B_i},v\rangle\Bigr)
\end{align}
where the middle equality is due to the fact that 
$\theta^*$ is constant over the block $B_{i}$, and the 
second equality follows since the space of monotonic sequences is a cone.

For notational convenience we denote $\delta_i(z) = \sup_{v \in \M_{|B_i|}: \|v\| \leq 1} \langle z_{B_i},v\rangle$ for $1 \leq i \leq m$. Now define the event 
\begin{equation}\label{highp1}
\A_{m,1} = \bigcap_{i = 1}^{s_1} \bigl\{\delta_{i}(z) \leq \E \delta_{i}(z) + \sigma x\bigr\}
\end{equation}
for some $x > 0$ that will be determined below.
Note that $\delta_{i}(z)$ is a Lipschitz function of $z$ with Lipschitz constant one, as can be seen by applying Lemma~\ref{gaulip}. Therefore, using the Gaussian concentration result in Theorem~\ref{gaussconc} and a union bound we can conclude 
\begin{equation}\label{highp2}
P(\A_{m,1}) \geq 1 - s \exp(-x^2/2)
\end{equation}
where we have used $s_1 \leq s$. On the event $\A_{m,1}$, by~\eqref{mid1} we will then have
\begin{equation*}
\sup_{\theta \in \M_{m}: \|\theta - \theta^*_{1:m}\| \leq t} \langle z_{1:m}, \theta - \theta^*_{1:m} \rangle \leq \sup_{\alpha \in \R_{+}^{s}: \|\alpha\| \leq t} \sum_{i = 1}^{s_1} \alpha_i \big(\E \delta_{i}(z) + \sigma x\big).
\end{equation*} 
Applying the Cauchy-Schwarz inequality to the equation then gives us on the event $\A_{m,1}$,
\begin{equation*}
\sup_{\theta \in \M_{m}: \|\theta - \theta^*_{1:m}\| \leq t} \langle z_{1:m}, \theta - \theta^*_{1:m} \rangle \leq t \sqrt{\sum_{i = 1}^{s_1} \E \delta_i(z)^2} + t \sigma x \sqrt{s_1}. 
\end{equation*}
It can now be shown that $\E \delta_i(z)^2$ equals $\sigma^2$ times
the statistical dimension of the monotone cone of dimension
$|B_i|$. This equality is shown in the appendix in
Lemma~\ref{Statistical Dimension}. The statistical dimension of the
monotone cone in dimension $d$ is defined to be $\E \|\Pi_{\M} z\|^2$
where $z$ is a $d$-dimensional standard Gaussian vector with
independent entries and $\Pi_{\M}$ denotes the projection operator
onto the cone of nondecreasing sequences. Using known results
(see~\citet{bellec:2016}) for the statistical dimension of the
monotone cone then gives us
\begin{equation*}
\E \delta_i(z)^2 \leq \sigma^2 \log(e |B_i|)
\end{equation*}
from which we conclude
\begin{equation*}
\sup_{\theta \in \M_{m}: \|\theta - \theta^*_{1:m}\| \leq t} \langle z_{1:m}, \theta - \theta^*_{1:m} \rangle \leq t \sigma \sqrt{\sum_{i = 1}^{s_1} \log(e |B_i|)} + t \sigma x \sqrt{s_1}.
\end{equation*}
Since $\log$ is a concave function we can upper bound the term inside the square root by Jensen's inequality to write
\begin{align*}
\sum_{i = 1}^{s_1} \log(e |B_i|) = s_1 \sum_{i = 1}^{s_1} \frac{\log(e |B_i|)}{s_1} \leq s_1 \log\left(\frac{e}{s_1} \sum_{i = 1}^{s_1} |B_i|\right) \leq s \log\Bigl(\frac{en}{s}\Bigr)
\end{align*}
where we have used $m \leq n,s_1 \leq s$ and the fact that the function $g(x) = x \log(en/x)$ is increasing for $1 \leq x \leq n$.
Therefore, conditioned on the event $\A_{m,1}$ we have
\begin{equation*}
\sup_{\theta \in \M_{m}: \|\theta - \theta^*_{1:m}\| \leq t} \langle z_{1:m}, \theta - \theta^*_{1:m} \rangle \leq t \sigma \sqrt{s \log\Bigl(\frac{en}{s}\Bigr)} + t \sigma x \sqrt{s}.
\end{equation*}
It is crucial to note that since the event $\A_{m,1}$ does not depend
on $t$, the above inequality holds simultaneously for all $t \geq
0$. To control the second term in the right side
of~\eqref{firstsecond} one can define an analogous event $\A_{m,2}$
for which
\begin{equation}\label{highp2p}
\P(\A_{m,2}) \geq 1 - s \exp(-x^2/2).
\end{equation}
Therefore, by~\eqref{firstsecond} we can conclude that on the event $\A_{m,1} \cap \A_{m,2}$ we have for all $t \geq 0$,
\begin{equation*}
X_{m}(t) \leq 2 t \sigma \sqrt{s \log\Bigl(\frac{en}{s}\Bigr)} +  2 t \sigma x \sqrt{s}.
\end{equation*}
Since the right side of the above equation does not depend on $m$, we can now assert that on the event $\bigcap_{m = 1}^{n} \big(\A_{m,1} \cap \A_{m,2}\big)$ we have for all $t \geq 0$,
\begin{equation*}
\sup_{\theta \in \U_n:\|\theta - \theta^*\| \leq t} \langle z, \theta - \theta^* \rangle  \leq 2 t \sigma \sqrt{s \log\Bigl(\frac{en}{s}\Bigr)} +  2 t \sigma x \sqrt{s}.
\end{equation*}
Note that~\eqref{highp1} and~\eqref{highp2} imply, by a simple union
bound argument, that
\begin{equation*}
\P(\cap_{m = 1}^{n} \big(\A_{m,1} \cap \A_{m,2}\big)) \geq 1 - 2 n s \exp(-x^2/2) \geq 1 - 2 n^2 \exp(-x^2/2).
\end{equation*}
Setting $x^2 = 2 (\alpha + 2) \log n$ thus finishes the proof of the lemma.
\end{proof}

Using the above lemma, we can now complete the proof of Theorem 2.2.
\begin{proof}[Proof of Theorem 2.2]
Just as in the proof of Theorem 2.1, the first step here also is to reduce to the case when $\theta^*$ is monotonic. To do this, again let $1 \leq m^* \leq n$ be such that $\theta^* \in C_{m^*}$. Let
$\theta^* = (\theta^*_1,\theta^*_2)$ where $\theta^*_1$ is an $m^*$
dimensional vector and $\theta^*_2$ is an $n - m^*$ dimensional
vector. Break up $z$ similarly. Then we can write
\begin{equation}\label{asgoodasmono2}
\sup_{\theta \in \U_n: \|\theta - \theta^*\| \leq t} \langle z, \theta - \theta^* \rangle \leq \sup_{\theta \in \U_{m^*}: \|\theta - \theta^*_1\| \leq t} \langle z_1, \theta - \theta^*_1 \rangle + \sup_{\theta \in \U_{n - m^*}: \|\theta - \theta^*_2\| \leq t} \langle z_2, \theta - \theta^*_2 \rangle.
\end{equation}
Let us first bound the first term on the righthand side; the second
term can be bounded in exactly the same way. Let us denote $s_1$ to be the number of constant pieces of $\theta^*_1$ and $s_2$ to be the number of constant pieces of $\theta^*_2$. Since $\theta_1^{*}$ is nondecreasing, we can use Lemma~\ref{lemmaadap} to obtain with probability not less than $1 - 2 n^{-\alpha}$, simultaneously for all $t \geq 0$,
\begin{equation}
\sup_{\theta \in \M_{m^*}: \|\theta - \theta^*_1\| \leq t} \langle z_1, \theta - \theta^*_1 \rangle \leq 2 \sigma t \sqrt{s_1 \log\Bigl(\frac{e m^*}{s_1}\Bigr)} + 2 \sigma t \sqrt{2 s_1 (\alpha + 2)\log n}
\end{equation}
where we also use the fact that $(m^*) \leq n$. The second term in the right side of~\eqref{asgoodasmono2} can be upper bounded similarly. The last two displays then give us the upper bound, with probability not less than $1 - 4 n^{-\alpha}$,  for all $t \geq 0$ 
\begin{align*}\label{eqstep}
\sup_{\theta \in \U_{n}: \|\theta - \theta^*\| \leq t} \langle z, \theta - \theta^* \rangle\big) \leq &2 \sigma t \sqrt{s_1 \log\Bigl(\frac{e m^*}{s_1}\Bigr)} + 2 \sigma t \sqrt{s_2 \log\Bigl(\frac{e (n - m^*)}{s_2}\Bigr)} + \\&2 \sigma t \big(\sqrt{s_1} + \sqrt{s_2}\big) \sqrt{2 (\alpha + 2)\log n} 
\end{align*}
Recall now the definition of the function $f_{\theta^*}: \R_{+} \rightarrow \R$ as 
\begin{equation}\label{localgw}
f_{\bt^*}(t) = \sup_{\theta \in \U_{n}: \|\theta - \theta^{*}\| \leq t} \langle z,\theta - \theta^{*}\rangle - \frac{t^2}{2}.
\end{equation}
Then on an event of probability not less than $1 - 4 n^{-\alpha}$,  for all $t \geq 0$ we have
\begin{align*}
f_{\theta^*}(t) \leq &2 \sigma t \sqrt{s_1 \log\Bigl(\frac{e m^*}{s_1}\Bigr)} + 2 \sigma t \sqrt{s_2 \log\Bigl(\frac{e (n - m^*)}{s_2}\Bigl)} + 2 \sqrt{2} \sigma t \sqrt{2 (s_1 + s_2) (\alpha + 2)\log n} - \frac{t^2}{2} 
\end{align*}
where we have also used $\sqrt{s_1} + \sqrt{s_2} \leq \sqrt{2} \sqrt{s_1 + s_2}.$
Since the right side of the last display is a quadratic in $t$, it can
be verified that setting 
\begin{equation*}
t^* = 2 \sigma \sqrt{s_1 \log\Bigl(\frac{e m^*}{s_1}\Bigr)} + 2 \sigma \sqrt{s_2 \log\Bigl(\frac{e (n - m^*)}{s_2}\Bigr)} + 2 \sqrt{2} \sigma \sqrt{2 (s_1 + s_2) (\alpha + 2)\log n}
\end{equation*}
yields $f_{\theta^*}(t) < 0$ for all $t \geq t^*$ on an event of
probability not less than $1 - 4 n^{-\alpha}$. A bound on $(t^*)^2$
can then be obtained by using the elementary inequality $(a + b + c)^2
\leq 3 (a^2 + b^2 + c^2)$. The use of Lemma~\ref{chapro}
and an applications of Jensen's inequality in the form
\begin{equation*}
s_1 \log\left(\frac{e m^*}{s_1}\right) + s_2 \log\left(\frac{e (n -
  m^*)}{s_2}\right) 
\leq (s_1+s_2) \log\left(\frac{e n}{s_1+s_2}\right)
\end{equation*}
then finishes the proof of the theorem.
\end{proof}

\section{Discussion}


We have presented results on adaptivity of the least squares estimator
in unimodal sequence estimation. Our results show that the risk is no
worse than $O(n^{-2/3})$ uniformly over all unimodal sequences. More
interestingly, we have shown that for the same least squares
estimator, the risk decays no slower than $O\bigl(s(\theta^*)\log n/n)$ where $s(\theta^*)$ is the number of ``steps'' in the
true sequence. Thus, the estimator achieves nearly parametric rates of
convergence when the true sequence is simple.  Our proof techniques
exploit the structure of the space of unimodal sequences as a union of
$n$ convex cones (Lemma~\ref{chapro}), together with a peeling
argument that bounds the log-covering number of unimodal sequences in
the neighborhood of a monotone sequence (Lemma~\ref{key1}).

Lemma~\ref{chapro} (the ``LSE Slicing Lemma'') can potentially be used
in any estimation problem where the parameter space is a finite union
of convex cones and the true mean vector has disjoint pieces lying in
the intersection of these cones. It is the key ingredient used in our 
proof of the adaptive risk bound in Theorem~\ref{adaptive} for piecewise
constant unimodal sequences. Using an appropriate approximation 
of any unimodal sequence by a piecewise constant
unimodal sequence, Theorem~\ref{adaptive} and the slicing lemma could actually have been used
to prove Theorem~\ref{worstcase}, but we would have suffered an extra multiplicative log factor. 
We instead used a more direct peeling approach that resulted
in tighter bounds.

A natural problem for further study is the multidimensional setting,
observing that unimodality generalizes as quasiconvexity in dimensions
greater than one.  It is also important to study oracle inequalities,
similar to those obtained for isotonic regression by
\cite{chatterjee2015matrix} and \cite{bellec:2016}. As remarked in
Section~2, however, existing proof techniques exploit convexity
through bounds on Gaussian widths and KKT conditions.  Our
Lemma~\ref{chapro} that exploits the structure as a union of convex
cones indicates such results may indeed be achievable, but we believe
that new proof techniques may be required. Before submitting the final
version of this paper we became aware of the fact that such oracle
inequalities have recently appeared in~\cite{bellec2015sharp}
and~\cite{seriation}.

\section*{Acknowledgements} The first author would like to thank
Adityanand Guntuboyina and Bodhisattva Sen for helpful discussions. Supported in part by NSF
grant DMS-1513594 and ONR grant N00014-12-1-0762.

\vskip20pt
\setlength{\bibsep}{8pt}
\bibliographystyle{apalike}
\bibliography{unimodalrefer}

\def\noopsort#1{}
\begin{thebibliography}{}

\bibitem[Amelunxen et~al., 2014]{amelunxen2014living}
Amelunxen, D., Lotz, M., McCoy, M.~B., and Tropp, J.~A. (2014).
\newblock Living on the edge: {P}hase transitions in convex programs with
  random data.
\newblock {\em Information and Inference}, 3(3):224--294.

\bibitem[Balabdaoui and Jankowski, 2015]{balabdaoui2015maximum}
Balabdaoui, F. and Jankowski, H. (2015).
\newblock Maximum likelihood estimation of a unimodal probability mass
  function.

\bibitem[Barlow et~al., 1972]{BBBB72}
Barlow, R.~E., Bartholomew, D.~J., Bremner, J.~M., and Brunk, H.~D. (1972).
\newblock {\em Statistical inference under order restrictions. {T}he theory and
  application of isotonic regression}.
\newblock John Wiley \& Sons, London-New York-Sydney.
\newblock Wiley Series in Probability and Mathematical Statistics.

\bibitem[Bellec, 2015]{bellec2015sharp}
Bellec, P.~C. (2015).
\newblock Sharp oracle inequalities for least squares estimators in shape
  restricted regression.
\newblock {\em arXiv preprint arXiv:1510.08029}.

\bibitem[Bellec, 2016]{bellec:2016}
Bellec, P.~C. (2016).
\newblock Sharp oracle inequalities for {L}east {S}quares estimators in shape
  restricted regression.
\newblock arXiv:1510.08029v2.

\bibitem[Bickel and Fan, 1996]{bickel1996some}
Bickel, P.~J. and Fan, J. (1996).
\newblock Some problems on the estimation of unimodal densities.
\newblock {\em Statistica Sinica}, 6(1):23--45.

\bibitem[Birg\'{e}, 1997]{birge1997estimation}
Birg\'{e}, L. (1997).
\newblock Estimation of unimodal densities without smoothness assumptions.
\newblock {\em The Annals of Statistics}, pages 970--981.

\bibitem[Birg{\'e} and Massart, 1993]{BM93}
Birg{\'e}, L. and Massart, P. (1993).
\newblock Rates of convergence for minimum contrast estimators.
\newblock {\em Probab. Theory Related Fields}, 97(1-2):113--150.

\bibitem[Boucheron et~al., 2013]{boucheron2013concentration}
Boucheron, S., Lugosi, G., and Massart, P. (2013).
\newblock {\em Concentration inequalities: A nonasymptotic theory of
  independence}.
\newblock Oxford university press.

\bibitem[Boyarshinov and Magdon-Ismail, 2006]{boyarshinov2006linear}
Boyarshinov, V. and Magdon-Ismail, M. (2006).
\newblock Linear time isotonic and unimodal regression in the {$L_1$} and
  {$L_\infty$} norms.
\newblock {\em Journal of Discrete Algorithms}, 4(4):676--691.

\bibitem[Bro and Sidiropoulos, 1998]{chemometrics1998least}
Bro, R. and Sidiropoulos, N. (1998).
\newblock Least squares algorithms under unimodality and non-negativity
  constraints.
\newblock {\em J. Chemometrics}, 12:223--247.

\bibitem[Chatterjee, 2014]{Chat14}
Chatterjee, S. (2014).
\newblock A new perspective on least squares under convex constraint.
\newblock {\em The Annals of Statistics}, 42(6):2340--2381.

\bibitem[Chatterjee et~al., 2015a]{chatterjee2015matrix}
Chatterjee, S., Guntuboyina, A., and Sen, B. (2015a).
\newblock On matrix estimation under monotonicity constraints.
\newblock {\em arXiv preprint arXiv:1506.03430}.

\bibitem[Chatterjee et~al., 2015b]{chatterjee2015risk}
Chatterjee, S., Guntuboyina, A., Sen, B., et~al. (2015b).
\newblock On risk bounds in isotonic and other shape restricted regression
  problems.
\newblock {\em The Annals of Statistics}, 43(4):1774--1800.

\bibitem[Chatterjee and Mukherjee, 2016]{chatterjee2016estimation}
Chatterjee, S. and Mukherjee, S. (2016).
\newblock On estimation in tournaments and graphs under monotonicity
  constraints.
\newblock arXiv:1603.04556.

\bibitem[Donoho, 1991]{Donoho91}
Donoho, D. (1991).
\newblock Gelfand $n$-widths and the method of least squares.
\newblock Technical report, University of California, Berkeley.
\newblock Department of Statistics.

\bibitem[Eggermont and LaRiccia, 2000]{eggermont2000maximum}
Eggermont, P. and LaRiccia, V. (2000).
\newblock Maximum likelihood estimation of smooth monotone and unimodal
  densities.
\newblock {\em Annals of Statistics}, pages 922--947.

\bibitem[Flammarion et~al., 2016]{seriation}
Flammarion, N., Mao, C., and Rigollet, P. (2016).
\newblock Optimal rates of statistical seriation.
\newblock arXiv:1607.02435v1.

\bibitem[Fris{\'e}n, 1986]{frisen1986unimodal}
Fris{\'e}n, M. (1986).
\newblock Unimodal regression.
\newblock {\em The Statistician}, pages 479--485.

\bibitem[Gao and Wellner, 2007]{GW07}
Gao, F. and Wellner, J.~A. (2007).
\newblock Entropy estimate for high-dimensional monotonic functions.
\newblock {\em Journal of Multivariate Analysis}, 98(9):1751--1764.

\bibitem[Grotzinger and Witzgall, 1984]{grotzinger1984projections}
Grotzinger, S. and Witzgall, C. (1984).
\newblock Projections onto order simplexes.
\newblock {\em Applied mathematics and Optimization}, 12(1):247--270.

\bibitem[Guntuboyina and Sen, 2013]{GSvex}
Guntuboyina, A. and Sen, B. (2013).
\newblock Global risk bounds and adaptation in univariate convex regression.
\newblock {\em Probab. Theory Related Fields}.
\newblock \textit{accepted}, available at http://arxiv.org/abs/1305.1648.

\bibitem[K{\"o}llmann et~al., 2014]{kollmann2014unimodal}
K{\"o}llmann, C., Bornkamp, B., and Ickstadt, K. (2014).
\newblock Unimodal regression using bernstein--schoenberg splines and
  penalties.
\newblock {\em Biometrics}, 70(4):783--793.

\bibitem[Laurent and Massart, 2000]{laurent2000adaptive}
Laurent, B. and Massart, P. (2000).
\newblock Adaptive estimation of a quadratic functional by model selection.
\newblock {\em Annals of Statistics}, pages 1302--1338.

\bibitem[Ledoux, 2001]{Ledoux01conc}
Ledoux, M. (2001).
\newblock {\em The Concentration of Measure Phenomenon}, volume~89 of {\em
  Mathematical Surveys and Monographs}.
\newblock American Mathematical Society.

\bibitem[Meyer and Woodroofe, 2000]{MW00}
Meyer, M. and Woodroofe, M. (2000).
\newblock On the degrees of freedom in shape-restricted regression.
\newblock {\em Ann. Statist.}, 28(4):1083--1104.

\bibitem[Meyer, 2001]{meyer2001alternative}
Meyer, M.~C. (2001).
\newblock An alternative unimodal density estimator with a consistent estimate
  of the mode.
\newblock {\em Statistica Sinica}, pages 1159--1174.

\bibitem[Shah et~al., 2015]{shah2015stochastically}
Shah, N.~B., Balakrishnan, S., Guntuboyina, A., and Wainright, M.~J. (2015).
\newblock Stochastically transitive models for pairwise comparisons:
  Statistical and computational issues.
\newblock {\em arXiv preprint arXiv:1510.05610}.

\bibitem[Shoung and Zhang, 2001]{shoung2001least}
Shoung, J.-M. and Zhang, C.-H. (2001).
\newblock Least squares estimators of the mode of a unimodal regression
  function.
\newblock {\em Annals of Statistics}, pages 648--665.

\bibitem[Stout, 2000]{stout2000optimal}
Stout, Q.~F. (2000).
\newblock Optimal algorithms for unimodal regression.
\newblock {\em Ann Arbor}, 1001:48109--2122.

\bibitem[Stout, 2008]{stout2008unimodal}
Stout, Q.~F. (2008).
\newblock Unimodal regression via prefix isotonic regression.
\newblock {\em Computational Statistics \& Data Analysis}, 53(2):289--297.

\bibitem[van~de Geer, 1990]{vdG90}
van~de Geer, S. (1990).
\newblock Estimating a regression function.
\newblock {\em Ann. Statist.}, 18(2):907--924.

\bibitem[Van~de Geer, 1993]{vdG93}
Van~de Geer, S. (1993).
\newblock Hellinger-consistency of certain nonparametric maximum likelihood
  estimators.
\newblock {\em Ann. Statist.}, 21(1):14--44.

\bibitem[Van~de Geer, 2000]{VandegeerBook}
Van~de Geer, S. (2000).
\newblock {\em Applications of Empirical Process Theory}.
\newblock Cambridge University Press.

\bibitem[Wang, 1996]{Wang96}
Wang, Y. (1996).
\newblock The $l_2$ risk of an isotonic estimate.
\newblock {\em Comm. Statist. Theory Methods}, 25:281--294.

\bibitem[Zhang, 2002]{Zhang02}
Zhang, C.-H. (2002).
\newblock Risk bounds in isotonic regression.
\newblock {\em Ann. Statist.}, 30(2):528--555.

\end{thebibliography}

\appendix

\section{Appendix}

\begin{lemma}\label{subg}
Let $X_1,X_2\dots,X_n$ be random variables for which 
\begin{equation}\label{appe1}
\P\bigl(X_i \geq \E X_i + ax\bigr) \leq \exp\left(-\frac{x^2}{2}\right) \:\:\: \forall x \geq 0
\end{equation}
for every $1 \leq i \leq n$ and $a >0$.
Then 
\begin{equation}
\E\left( \max_{1 \leq i \leq n} X_i\right) \leq \max_{1 \leq i \leq n} \E X_i + a \left(\sqrt{2 \log n} + \sqrt{2 \pi}\right).
\end{equation}
\end{lemma}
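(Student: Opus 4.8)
The plan is to reduce to a maximal inequality for centered variables with sub-Gaussian upper tails, and then apply a layer-cake (tail-integration) bound with a carefully chosen truncation level.

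First I would center. Set $Y_i = X_i - \E X_i$, so that the hypothesis becomes $\P(Y_i \ge ax) \le \exp(-x^2/2)$ for all $x \ge 0$. For each $i$ we have $X_i = Y_i + \E X_i \le \max_{1\le j\le n} Y_j + \max_{1\le j\le n}\E X_j$, and taking the maximum over $i$ gives $\max_i X_i \le \max_i Y_i + \max_i \E X_i$. Taking expectations, it therefore suffices to prove $\E\bigl(\max_{1\le i\le n} Y_i\bigr) \le a\bigl(\sqrt{2\log n} + \sqrt{2\pi}\bigr)$.

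Next I would pass to the positive part: since $\max(\max_i Y_i,0) = \max_i \max(Y_i,0)$ is a nonnegative random variable, $\E(\max_i Y_i) \le \E\bigl(\max_i Y_i\bigr)_+ = \int_0^\infty \P\bigl(\max_i Y_i > u\bigr)\,du$ by the layer-cake formula. For $u \ge 0$, a union bound together with the hypothesis applied at $x = u/a$ yields $\P(\max_i Y_i > u) \le \sum_{i=1}^n \P(Y_i > u) \le n\,e^{-u^2/(2a^2)}$, while trivially $\P(\max_i Y_i > u) \le 1$. I would then split the integral at $u_0 = a\sqrt{2\log n}$, which is exactly the level at which $n\,e^{-u_0^2/(2a^2)} = 1$: the part over $[0,u_0]$ contributes at most $u_0 = a\sqrt{2\log n}$, and the part over $[u_0,\infty)$ contributes at most $n\int_{u_0}^\infty e^{-u^2/(2a^2)}\,du = na\int_{\sqrt{2\log n}}^\infty e^{-w^2/2}\,dw$.

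Finally, invoking the elementary Gaussian tail bound $\int_t^\infty e^{-w^2/2}\,dw \le \sqrt{\pi/2}\,e^{-t^2/2}$ (valid for all $t \ge 0$; it holds with equality at $t=0$ and the difference of the two sides is unimodal in $t$) with $t = \sqrt{2\log n}$ gives $na\int_{\sqrt{2\log n}}^\infty e^{-w^2/2}\,dw \le na\sqrt{\pi/2}\,e^{-\log n} = a\sqrt{\pi/2} \le a\sqrt{2\pi}$. Combining the two pieces yields $\E(\max_i Y_i) \le a\sqrt{2\log n} + a\sqrt{2\pi}$, which is what was needed. There is no genuine obstacle in this argument; the only points requiring a little care are the reduction to the nonnegative random variable $\bigl(\max_i Y_i\bigr)_+$ and the choice of the truncation level $u_0$ matched to the Gaussian tail, both of which are routine.
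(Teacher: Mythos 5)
Your proof is correct and follows essentially the same route as the paper's: center, pass to the positive part, apply the tail-integral (layer-cake) formula with a union bound, split at $a\sqrt{2\log n}$, and finish with a Gaussian tail estimate. The only cosmetic difference is that you center each $X_i$ by its own mean and then add back $\max_i \E X_i$, whereas the paper subtracts $\max_j \E X_j$ from every $X_i$ at the outset; your tail constant $a\sqrt{\pi/2}$ is in fact slightly sharper than the stated $a\sqrt{2\pi}$.
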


\begin{proof}
Let $m = \max_{1 \leq i \leq n} \E X_i$. Define $Y_i = X_i - m$. Then by~\eqref{appe1}
we have sub-Gaussian tail behavior for every $1 \leq i \leq n$,
\begin{equation}\label{ybd}
\P(Y_i \geq x) \leq \exp\left(-\frac{x^2}{2 a^2}\right) \:\:\: \forall x \geq 0.
\end{equation}
Now by defining $Z_i = \max\{Y_i,0\}$ we certainly have $\E \max Y_i \leq \E \max Z_i$.
Also for any $x \geq 0$ we have $\P(\max Z_i \geq x) = \P(\max Y_i \geq x)$. Using the tail integral formula for the expectation of a nonnegative random variable we obtain
\begin{align}
\E \max Y_i &\leq \int_{0}^{\infty} \P(\max Y_i \geq x) \,dx \\
 & \leq \int_{0}^{\infty} \min\left\{n \exp\left(-\frac{x^2}{2
  a^2}\right),1\right\} dx \\
&= a \sqrt{2 \log n} + \int_{a \sqrt{2 \log n}}^{\infty} n \exp\left(-\frac{x^2}{2 a^2}\right) dx .
\end{align}
A standard fact about Gaussian tails give us the inequality
\begin{equation}
\int_{a \sqrt{2 \log n}}^{\infty} \:\:\exp\left(-\frac{x^2}{2 a^2}\right) \leq \sqrt{2 \pi} \frac{a}{n},
\end{equation}
which completes the proof of the lemma.
\end{proof}

\begin{lemma}\label{gaulip}
Let $A \subset \R^n$ be a closed set. Fix any $\theta^* \in \R^n$ and $t > 0$. Define the function $f:\R^n \rightarrow \R$ as follows:
\begin{equation}
f(z) = \sup_{\theta \in A: \|\theta - \theta^*\| \leq t} \langle z,\theta - \theta^* \rangle.
\end{equation}
Then $f$ is a Lipschitz function of $z$ with Lipschitz constant $t$.
\end{lemma}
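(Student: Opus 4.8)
The plan is to invoke the elementary principle that a pointwise supremum of affine functions whose linear parts have norm at most $t$ is itself $t$-Lipschitz. Write $A_t := \{\theta \in A : \|\theta - \theta^*\| \leq t\}$. Since $A$ is closed and the Euclidean ball of radius $t$ around $\theta^*$ is compact, the set $A_t$ is compact, so (whenever it is nonempty) the supremum defining $f(z)$ is attained; if $A_t = \emptyset$ the statement is vacuous under the usual convention $\sup \emptyset = -\infty$, so I would assume $A_t \neq \emptyset$. In particular $f$ is finite-valued.

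The main step is then a one-line comparison. Fix $z_1, z_2 \in \R^n$ and pick $\theta_1 \in A_t$ with $f(z_1) = \langle z_1, \theta_1 - \theta^* \rangle$. Since $\theta_1$ is feasible for the supremum defining $f(z_2)$, we have $f(z_2) \geq \langle z_2, \theta_1 - \theta^* \rangle$, hence
\[
f(z_1) - f(z_2) \leq \langle z_1 - z_2, \theta_1 - \theta^* \rangle \leq \|z_1 - z_2\|\,\|\theta_1 - \theta^*\| \leq t\,\|z_1 - z_2\|,
\]
using Cauchy–Schwarz and the constraint $\|\theta_1 - \theta^*\| \leq t$. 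Exchanging the roles of $z_1$ and $z_2$ yields the reverse inequality, so $|f(z_1) - f(z_2)| \leq t\,\|z_1 - z_2\|$, which is exactly the claim.

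There is essentially no obstacle here; the only points needing a word of care are the attainment of the supremum (handled by compactness of $A_t$) and the degenerate empty case (handled by convention). Note also that this argument uses nothing about convexity of $A$, which is why the lemma applies directly to the nonconvex sets $\U_{m}$, $\U_n$, and the cones $C_m$ appearing in the earlier proofs.
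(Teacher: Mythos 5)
Your proof is correct and follows essentially the same route as the paper's: pick a maximizer $\tilde\theta$ for one argument (using compactness of $\{\theta\in A:\|\theta-\theta^*\|\leq t\}$), compare the two suprema at that feasible point, and apply Cauchy--Schwarz with the constraint $\|\tilde\theta-\theta^*\|\leq t$. Your extra remarks on the empty case and on the argument not requiring convexity are fine but not needed for the paper's application.
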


\begin{proof}
Since the function $\langle z,\theta - \theta^* \rangle$ is a
continuous function of $z$ and the set $\{\theta \in A: \|\theta -
\theta^*\| \leq t\}$ is compact, the supremum is attained, at
$\tilde{\theta}$ say. Then we have
\begin{align}
f(z) - f(z') & = \langle z,\tilde{\theta} - \theta^* \rangle - f(z') \\
& \leq \langle z,\tilde{\theta} - \theta^* \rangle - \langle
z',\tilde{\theta} - \theta^* \rangle \\
& = \langle z - z',\tilde{\theta} - \theta^* \rangle \\
& \leq \|z - z'\| \|\tilde{\theta} - \theta^* \| \\
& \leq t \|z - z'\|.
\end{align}
The first inequality is because we set $\theta = \tilde{\theta}$
instead of taking supremum over $\theta$.  The second inequality is
just the Cauchy Schwarz inequality, and the last inequality follows
from the fact that $\|\tilde{\theta} - \theta^*\| \leq t$ by the
choice of $\tilde{\theta}$.
\end{proof}


\begin{lemma}\label{Statistical Dimension}
Let $z$ be a standard $d$ dimensional Gaussian vector with independent entries. Let $\Pi_{\M}$ denote the projection operator onto the space $\M \subset \R^d$. Then we have the following pointwise equality:
\begin{equation}
\|\Pi_{\M}(z)\| = \sup_{\theta \in \M: \|\theta\| \leq 1} \langle z, \theta \rangle.
\end{equation}
\end{lemma}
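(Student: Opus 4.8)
The plan is to observe that this is a purely deterministic fact about Euclidean projection onto a closed convex cone; the Gaussian distribution of $z$ plays no role, and the identity holds for every fixed $z \in \R^d$. The one tool needed is the variational (KKT) characterization of the projection $p := \Pi_{\M}(z)$ onto the closed convex cone $\M$: the point $p$ is the unique element of $\M$ satisfying $\langle z - p, v \rangle \leq 0$ for all $v \in \M$, together with the complementary-slackness identity $\langle z - p, p \rangle = 0$. I would first recall this characterization (it is the standard obtuse-angle condition for projection onto a cone), and then argue the two inequalities separately.

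First I would dispose of the degenerate case $p = 0$. Here the cone condition reads $\langle z, v \rangle \leq 0$ for all $v \in \M$, so $\sup_{\theta \in \M:\, \|\theta\| \leq 1} \langle z, \theta \rangle \leq 0$; since $0 \in \M$ with $\|0\| \leq 1$, the supremum equals $0 = \|p\|$, as claimed.

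Now suppose $p \neq 0$. For the lower bound I would exhibit the explicit competitor $\theta_0 = p/\|p\|$, which lies in $\M$ because $\M$ is a cone (closed under positive scaling) and has $\|\theta_0\| = 1$. Then $\langle z, \theta_0 \rangle = \langle z - p, \theta_0 \rangle + \langle p, \theta_0 \rangle = 0 + \|p\|$, where the first inner product vanishes by the identity $\langle z - p, p \rangle = 0$; hence the supremum is at least $\|p\|$. For the matching upper bound, take an arbitrary $\theta \in \M$ with $\|\theta\| \leq 1$ and split $\langle z, \theta \rangle = \langle z - p, \theta \rangle + \langle p, \theta \rangle$. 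The first term is $\leq 0$ by the cone condition with $v = \theta \in \M$, and the second is $\leq \|p\|\,\|\theta\| \leq \|p\|$ by Cauchy--Schwarz. Therefore the supremum is at most $\|p\| = \|\Pi_{\M}(z)\|$, and combining the two bounds proves the equality.

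The proof has no real obstacle; it is essentially one application of Cauchy--Schwarz once the right characterization is in hand. The only points requiring care are to invoke the projection characterization appropriate to a \emph{cone} (so that both the inequality $\langle z-p, v\rangle \le 0$ and the equality $\langle z-p,p\rangle = 0$ are available), to use that $\M$ is scale-invariant when normalizing $p$, and to handle $p = 0$ separately since one cannot then form $p/\|p\|$.
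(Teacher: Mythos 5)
Your proof is correct, but it takes a genuinely different route from the paper's. The paper proves this lemma by specializing its Lemma~3.1 (the slicing/variational representation of the least squares error) to the single convex set $C=\M$ with $\theta^*=0$: that lemma gives $\|\Pi_{\M}(z)\| \in \argmax_{t\ge 0}\bigl(\sup_{\theta\in\M:\|\theta\|\le t}\langle z,\theta\rangle - t^2/2\bigr)$, and then the cone property turns the inner supremum into $t\,\sup_{\theta\in\M:\|\theta\|\le 1}\langle z,\theta\rangle$, so the maximizer of the resulting concave quadratic in $t$ is exactly the unit-ball supremum. You instead argue directly from the Moreau/KKT characterization of projection onto a closed convex cone ($\langle z-p,v\rangle\le 0$ for all $v\in\M$ and $\langle z-p,p\rangle=0$), getting the lower bound from the normalized competitor $p/\|p\|$ and the upper bound from Cauchy--Schwarz, with the $p=0$ case handled separately. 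Both arguments are sound and both correctly treat the identity as deterministic in $z$. Your version is self-contained and uses only standard convex analysis; the paper's version is shorter in context because it recycles machinery it has already built (and, amusingly, sidesteps the KKT conditions that the paper says it cannot use for the nonconvex space $\U_n$ — though for this convex-cone lemma your use of them is perfectly legitimate). The only point to make explicit in your write-up is that $\M$ is a closed convex cone containing $0$, which is what licenses both the projection characterization and the normalization step.
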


\begin{proof}
We can use Lemma 3.1 to prove this equality. Here $C = \M$ and hence
$C$ is a convex set. Take the zero vector in $C$. Then the least
squares estimator $\hat{\theta}$ is the same as $\Pi_{\M}(z)$ and Lemma 3.1 implies
\begin{equation}
\|\Pi_{\M}(z)\| = \argmax_{t \geq 0} \left(\sup_{\theta \in \M: \|\theta\| \leq t} \langle z, \theta \rangle - \frac{t^2}{2}\right).
\end{equation}
Now since the space $\M$ is a cone we can rewrite the last equation as 
 \begin{equation}
\|\Pi_{\M}(z)\| = \argmax_{t \geq 0}  \left(t \sup_{\theta \in \M: \|\theta\| \leq 1} \langle z, \theta \rangle - \frac{t^2}{2}\right).
\end{equation}
The right side above is the maximizer of a concave quadratic in
$t$; differentiating with respect to $t$ to compute the maximizer finishes the proof.
\end{proof}

\begin{lemma}\label{concav}
Let $g(t):\R_{+} \rightarrow \R$ be defined as 
$$g(t) = \sup_{\theta \in C \cap \{v \in \R^n: \|v\| \leq t\}} \langle a,\theta \rangle$$
where $C \subset \R^n$ is a closed convex set and $a$ is a fixed vector in $\R^n$. Then $g$ is a concave function.
\end{lemma}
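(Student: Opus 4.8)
The plan is a direct verification of the defining inequality for concavity, using the convexity of $C$ together with the triangle inequality for the Euclidean norm. First I would pin down the effective domain of $g$. Set $d_C = \inf_{v \in C}\|v\|$. For $t < d_C$ the constraint set $C \cap \{v : \|v\| \leq t\}$ is empty, so $g(t) = -\infty$ under the usual convention for the supremum over the empty set (this matches the convention already adopted in the proof of Lemma~\ref{chapro}); for $t \geq d_C$ the set $C \cap \{v : \|v\| \leq t\}$ is the intersection of a closed set with a compact ball, hence nonempty and compact, so the supremum defining $g(t)$ is attained and finite since $\theta \mapsto \langle a, \theta\rangle$ is continuous.

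Next, fix $t_1, t_2 \geq 0$ and $\lambda \in [0,1]$, and write $t_\lambda = \lambda t_1 + (1-\lambda) t_2$. The cases $\lambda \in \{0,1\}$ are trivial, so take $\lambda \in (0,1)$. If either $t_1 < d_C$ or $t_2 < d_C$, then $\lambda g(t_1) + (1-\lambda) g(t_2) = -\infty$ and the desired inequality $g(t_\lambda) \geq \lambda g(t_1) + (1-\lambda) g(t_2)$ holds trivially. So assume $t_1, t_2 \geq d_C$; note this forces $t_\lambda \geq d_C$ as well. Let $\theta_1, \theta_2 \in C$ attain $g(t_1), g(t_2)$, so $\|\theta_i\| \leq t_i$ and $\langle a, \theta_i\rangle = g(t_i)$ for $i = 1,2$.

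Then I would take the convex combination $\theta_\lambda := \lambda \theta_1 + (1-\lambda)\theta_2$. Since $C$ is convex, $\theta_\lambda \in C$, and by the triangle inequality $\|\theta_\lambda\| \leq \lambda \|\theta_1\| + (1-\lambda)\|\theta_2\| \leq \lambda t_1 + (1-\lambda) t_2 = t_\lambda$, so $\theta_\lambda$ is feasible in the problem defining $g(t_\lambda)$. Consequently, by linearity of $\langle a, \cdot \rangle$,
$$g(t_\lambda) \geq \langle a, \theta_\lambda \rangle = \lambda \langle a, \theta_1 \rangle + (1-\lambda) \langle a, \theta_2 \rangle = \lambda g(t_1) + (1-\lambda) g(t_2),$$
which is exactly the concavity inequality.

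There is essentially no hard step here: the only points requiring care are the bookkeeping around the $-\infty$ convention on $[0, d_C)$ and the appeal to compactness (closedness of $C$ plus boundedness of the constraining ball) to ensure the supremum is attained, so that $\theta_1$ and $\theta_2$ genuinely exist. If one prefers to avoid attainment altogether, the same argument goes through verbatim with $\varepsilon$-maximizers $\theta_i$ satisfying $\langle a, \theta_i \rangle \geq g(t_i) - \varepsilon$, followed by letting $\varepsilon \downarrow 0$.
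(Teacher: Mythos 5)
Your proof is correct and follows essentially the same route as the paper's: pick maximizers $\theta_1,\theta_2$ (which exist by closedness of $C$ and compactness of the ball), form the convex combination, check feasibility via convexity of $C$ and the norm bound, and conclude by linearity of $\langle a,\cdot\rangle$. The only difference is that you explicitly handle the empty-constraint-set case with the $-\infty$ convention, a point the paper's proof leaves implicit.
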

\begin{proof}
We have to show that for any $t_1, t_2 > 0$ and $0 \leq \lambda \leq 1$ we have 
\begin{equation}
g(\lambda t_1 + (1 - \lambda) t_2) \geq \lambda g(t_1) + (1 - \lambda) g(t_2). 
\end{equation}
Let $\theta_1 \in C \cap \{v \in \R^n: \|v\| \leq t_1\}$ and $\theta_2 \in C \cap \{v \in \R^n: \|v\| \leq t_2\}$ be where the values $g(t_1)$ and $g(t_2)$ are achieved. Such a $\theta_1$ and $\theta_2$ exist because $C$ is closed and $\{v \in \R^n: \|v\| \leq t\}$ is closed and bounded. Now consider the point $\theta_3 = \lambda \theta_1 + (1 - \lambda) \theta_2$. Since $C$ is convex, $\theta_3 \in C$ and since the Euclidean squared norm is a convex function we also have $\|\theta_3\|  \leq t$. Now we have
\begin{equation}
\lambda g(t_1) + (1 - \lambda) g(t_2) = \langle a,\theta_3 \rangle \leq g(\lambda t_1 + (1 - \lambda) t_2)
\end{equation}
where the first equality is because of linearity of the inner product function and the last inequality is because of feasibility of $\theta_3$. This finishes the proof of the lemma.
\end{proof}

\end{document}